\newtheorem{teo}{Theorem}[section]
\newtheorem{pro}[teo]{Proposition}
\newtheorem{coro}[teo]{Corollary}
\newtheorem{lem}[teo]{Lemma}
\theoremstyle{definition}
\newtheorem{defi}[teo]{Definition}
\newtheorem{exam}[teo]{Example}
\newtheorem{rem}[teo]{Remark}
\DeclareMathOperator{\rank}{rank}
\DeclareMathOperator{\tors}{tors}
\DeclareMathOperator{\Spec}{Spec}
\newcommand{\N}{\mathbb N}
\newcommand{\C}{\mathbb C}
\newcommand{\V}{\mathbf{V}}
\newcommand{\Kx}{k[x_1,\ldots,x_s]}
\newcommand{\Om}{\Omega^{1}}
\newcommand{\Omn}{\Omega^{(n)}_{A/k}}
\newcommand{\Omb}{\Omega^{(n)}_{B/k}}
\newcommand{\Omr}{\Omega^{(n)}_{R/k}}
\newcommand{\Omd}{\Omega^{(2)}_{B/k}}
\newcommand{\dn}{d^{n}_A}
\newcommand{\dnb}{d^{n}_{B}}
\newcommand{\bd}{\bar{d}}
\newcommand{\m}{\mathfrak{m}}
\newcommand{\p}{\mathfrak{p}}
\newcommand{\q}{\mathfrak{q}}
\newcommand{\ida}{\mathfrak{a}}
\newcommand{\idc}{\mathfrak{c}}
\newcommand{\mm}{\mathfrak{M}}
\newcommand{\ml}{\mathcal{L}}
\newcommand{\br}{\bar{R}}
\newcommand{\Rq}{R_{\q}}
\newcommand{\ff}{\bar{F_{\beta}}}
\newcommand{\fb}{F_{\beta}}
\newcommand{\Jac}{Jac_n(f)}
\newcommand{\Jar}{Jac_n(f_1,\ldots,f_r)}
\begin{document}

\title{On the module of differentials of order $n$ of hypersurfaces}
\author{Paul Barajas, Daniel Duarte\footnote{Research supported by CONACyT through the program C\'atedras CONACyT.}}
\maketitle

\begin{abstract}

We give an explicit presentation of the module of differentials of order $n$ of a finitely generated algebra via a higher-order Jacobian matrix.
We use the presentation to study some aspects of this module in the case of hypersurfaces. More precisely, we prove higher-order 
versions of known results relating freness and torsion-freness of the module of differentials with the regularity and normality of 
the hypersurface. We also study its projective dimension.

\end{abstract}


\section*{Introduction}

It is a classical fact that the module of differentials of a finitely generated algebra can be presented using a Jacobian matrix.
Recall that for a $k$-algebra $A$, the module of K\"ahler differentials of $A$ is defined as the quotient $\Om_{A/k}:=I_A/I_A^2$, 
where $I_A\subset A\otimes_kA$ is the kernel of the multiplication map. More generally, a module of K\"ahler differentials
of order $n$ can be defined as $\Omn:=I_A/I_A^{n+1}$ (see, for instance, \cite{G,N,O}).

The first goal of this paper is to give an explicit presentation of $\Omn$ as in case $n=1$. To that end, we first consider some functorial 
properties of the module of differentials proved by Y. Nakai in \cite{N}. Those properties give place to a higher-order Jacobian matrix 
which allows us to obtain the desired presentation.

With this result at hand, we are able to prove higher-order versions of some classical results regarding the module of 
differentials in the case of hypersurfaces. 

It is well-known that, under some conditions, a localization of a finitely generated algebra is regular if and only if its module of differentials
is free. We prove the analogous statement for the module of differentials of order $n$ in the case of hypersurfaces. 

By looking at some basic properties of the higher-order Jacobian matrix of a hypersurface, we also prove that the projective dimension 
of $\Omn$ is less or equal than one (the analogous statement for the module of principal parts was proved by A. Erdogan in \cite{Er}). 
This result, combined with the characterization of regularity in terms of $\Omn$, allows us to conclude that the projective dimension of 
this module is zero if and only if the hypersurface is non-singular.

Another result concerning the module of differentials states that, for local complete intersections, the normality of the ring is equivalent 
to the torsion-freeness of its module of differentials (this was proved by J. Lipman in \cite{L} and, independently, by S. Suzuki in \cite{S}).
We show that, in the case of hypersurfaces, the same result holds for the module of differentials of order $n$. The proof of this fact is 
essentially the same as the one presented in \cite{S}: we simply replace the usual Jacobian matrix by our higher-order Jacobian matrix.
We conclude with some examples that illustrate other features of the torsion submodule of the module of differentials of order $n$.


\section{The module of differentials of order $n$}

All rings we consider are assumed to be commutative and with a unit element.

Let $A$ be a $k$-algebra and $F$ be an $A$-module. Recall that a $k$-linear map $D:A\rightarrow F$ is said to be a derivation if for 
any two elements $x_0,x_1\in A$, the following identity holds:
$$D(x_0x_1)=x_0D(x_1)+x_1D(x_0).$$
A derivation of order $n$ can be defined generalizing the previous identity as follows.

\begin{defi}\cite[Chapter I-1]{N}
A $k$-linear map $D:A\rightarrow F$ is said to be a \textit{derivation of order} $n$ if for any $x_0,\ldots,x_n\in A$, the following 
identity holds:
$$D(x_0\cdots x_n)=\sum_{s=1}^n(-1)^{s-1}\sum_{i_1<\ldots<i_s}x_{i_1}\cdots x_{i_s}
D(x_0\cdots \check{x_{i_1}}\cdots \check x_{i_s}\cdots x_n),$$
where $\check{x_{i_j}}$ means that this element do not appear in the product. Notice that a derivation of order 1 is a usual derivation. 
\end{defi}

The module of derivations of order $n$ can be represented as follows.
Let $I_A$ denote the kernel of the homomorphism $A\otimes_{k}A\rightarrow A$, $a\otimes b\mapsto ab$. Giving structure of $A$-module 
to $A\otimes_{k}A$ by multiplying on the left, we define the $A$-module $$\Omn:=I_A/I_A^{n+1}.$$ Define the map 
$\dn:A\rightarrow\Omn$, $a\mapsto(1\otimes a-a\otimes1)+I_A^{n+1}$. This map is a derivation of order $n$ and its image generates
$\Omn$ as an $A$-module (see \cite[Chapter II-1]{N}).

\begin{defi}
The $A$-module $\Omn$ is called the \textit{module of K\"ahler differentials of order} $n$ of $A$ over $k$. The map $\dn$ is called 
the \textit{canonical derivation} of $A$ in $\Omn$.
\end{defi}

\begin{teo}\cite[Proposition 1.6]{O}
Let $D:A\rightarrow F$ be a derivation of order $n$. Then there exists a unique $A$-linear map $h:\Omn\rightarrow F$ such
that $D=h\circ\dn$.
\end{teo}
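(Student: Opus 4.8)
The plan is to exhibit $\dn$ as the universal derivation of order $n$ by constructing $h$ explicitly and deducing its uniqueness from the fact, recalled above, that the image of $\dn$ generates $\Omn$. Uniqueness is immediate: if $h$ is $A$-linear and satisfies $h\circ\dn=D$, then $h$ is prescribed on the generators $\dn(a)$, hence determined on all of $\Omn$. The entire content therefore lies in the existence of $h$.

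For existence, I would first record the auxiliary fact that $D(1)=0$, which follows by evaluating the defining identity at $x_0=\cdots=x_n=1$ and simplifying the resulting binomial coefficients. Next, rather than working with $\Omn$ directly, I would work one level up, in $A\otimes_k A$. Define the $k$-linear map $\delta_D\colon A\otimes_k A\to F$ by $\delta_D(a\otimes b)=aD(b)$; this is well defined because $(a,b)\mapsto aD(b)$ is $k$-bilinear, and it is $A$-linear for the left-multiplication structure since $\delta_D(a\cdot(b\otimes c))=\delta_D(ab\otimes c)=abD(c)=a\,\delta_D(b\otimes c)$. Writing $\omega_a:=1\otimes a-a\otimes1$, one has $\delta_D(\omega_a)=D(a)-aD(1)=D(a)$.

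The key step is to prove that $\delta_D$ vanishes on $I_A^{n+1}$. For this I would use two observations. First, $I_A$ is generated as a left $A$-module by the elements $\omega_a$: indeed, any $\sum_i a_i\otimes b_i\in I_A$ satisfies $\sum_i a_ib_i=0$, whence $\sum_i a_i\otimes b_i=\sum_i a_i\,\omega_{b_i}$. Expanding products in the commutative ring $A\otimes_k A$ then shows that $I_A^{n+1}$ is generated as a left $A$-module by the products $\omega_{a_0}\omega_{a_1}\cdots\omega_{a_n}$, so by $A$-linearity of $\delta_D$ it suffices to check $\delta_D(\omega_{a_0}\cdots\omega_{a_n})=0$ for all $a_0,\ldots,a_n$. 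Expanding the product gives $\omega_{a_0}\cdots\omega_{a_n}=\sum_{T\subseteq\{0,\ldots,n\}}(-1)^{|T|}\big(\prod_{j\in T}a_j\big)\otimes\big(\prod_{j\notin T}a_j\big)$, so applying $\delta_D$ and separating the terms $T=\emptyset$ and $T=\{0,\ldots,n\}$ (the latter vanishing since $D(1)=0$) yields $\delta_D(\omega_{a_0}\cdots\omega_{a_n})=D(a_0\cdots a_n)+\sum_{1\le|T|\le n}(-1)^{|T|}\big(\prod_{j\in T}a_j\big)D\big(\prod_{j\notin T}a_j\big)$. I expect this identification to be the main obstacle, as it requires matching indices and signs carefully: with $T$ playing the role of the pulled-out indices $\{i_1,\ldots,i_s\}$ and $(-1)^{|T|}=-(-1)^{|T|-1}$, the remaining sum is precisely the negative of the right-hand side of the defining identity of a derivation of order $n$, hence equals $-D(a_0\cdots a_n)$, and the whole expression vanishes.

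Finally, since $\delta_D$ kills $I_A^{n+1}$, it factors through $(A\otimes_k A)/I_A^{n+1}$; restricting the induced map to the submodule $\Omn=I_A/I_A^{n+1}$ produces an $A$-linear map $h\colon\Omn\to F$ with $h(\dn(a))=\delta_D(\omega_a)=D(a)$, that is, $h\circ\dn=D$. Together with the uniqueness observed at the outset, this completes the argument.
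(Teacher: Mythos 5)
Your proof is correct and complete; note that the paper itself gives no proof of this statement, simply citing Osborn's Proposition 1.6, so there is nothing internal to compare against. Your argument is the standard one for this universal property: the uniqueness from the fact that $\dn(A)$ generates $\Omn$, the reduction of $I_A^{n+1}$ to left $A$-combinations of products $\omega_{a_0}\cdots\omega_{a_n}$, and the observation that the expansion of $\delta_D(\omega_{a_0}\cdots\omega_{a_n})$ is exactly $D(a_0\cdots a_n)$ minus the right-hand side of the defining identity of an order-$n$ derivation (the $T=\{0,\ldots,n\}$ term dying because $D(1)=0$) are all correctly executed, and together they establish the theorem.
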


If $A=\Kx$, the module $\Omn$ is a free $A$-module with basis $\{(\dn(x))^{\alpha}|\alpha\in\N^s,1\leq|\alpha|\leq n\}$,
where $(\dn(x))^{\alpha}:=(\dn(x_1))^{\alpha_1}\cdots(\dn(x_s))^{\alpha_s}$ (see, for instance, \cite[Chapter II-2]{N}). 
Actually, for an element $f\in A$, 
\begin{equation}\label{Taylor}
\dn(f)=\sum_{\substack{\alpha\in\N^s \\ 1\leq|\alpha|\leq n}}\frac{1}{\alpha!}\frac{\partial^{\alpha}(f)}{\partial x^{\alpha}}(\dn(x))^{\alpha}.
\end{equation}
\begin{rem}
The symbol $\frac{1}{\alpha!}\frac{\partial^{\alpha}(f)}{\partial x^{\alpha}}$ denotes the formal derivative of $f$ if $char(k)=0$
(with the usual multi-index notation). If $char(k)=p>0$, this symbol denotes the coefficient of $t^{\alpha}$ in $f(x+t)-f(t)$ 
(\cite[Chapter II-2]{N}).
\end{rem}


\section{An explicit presentation of $\Omb$ for a finitely generated $k$-algebra.}

In this section we give an explicit presentation of the module of differentials of order $n$ for a finitely generated $k$-algebra. 
It is well-known that for $n=1$, the Jacobian matrix presents the module of differentials. We define a higher-order Jacobian
matrix that, combined with a fundamental exact sequence of the module of differentials, gives the corresponding presentation
for $n\geq1$.

Let $A$ be a $k$-algebra, $J\subset A$ an ideal, and $B=A/J$. Consider the $k$-linear map
\begin{align}
\rho:J/&J^{n+1}\rightarrow\Omn\otimes_A B\notag\\
&\bar{c}\mapsto\dn(c)\otimes1\notag
\end{align}
Denote as $\langle Im(\rho)\rangle_B$ the $B$-module generated by the image of $\rho$ in $\Omn\otimes_A B$.

\begin{teo}\cite[Theorem II-14]{N}\label{sec ex seq}
With the previous notation, the following sequence of $B$-modules is exact
\begin{equation}\label{second seq}
\xymatrix{0\ar[r]&\langle Im(\rho)\rangle_B\ar[r]&\Omn\otimes_A B\ar[r]^{\tau}&\Omb\ar[r]&0},\notag
\end{equation}
where $\tau(\dn(a)\otimes b)=b\dnb(\bar{a})$. In particular,
$$\Omb\cong\big(\Omn\otimes_A B\big)/\langle Im(\rho)\rangle_B.$$
\end{teo}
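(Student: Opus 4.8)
The plan is to verify exactness at the two nontrivial spots, the real content being the surjectivity of $\tau$ together with the identification of its kernel with $\langle Im(\rho)\rangle_B$; exactness at $\langle Im(\rho)\rangle_B$ is merely the injectivity of an inclusion of submodules. First I would construct $\tau$ rigorously from the universal property of Theorem \ref{sec ex seq}'s predecessor (the cited \cite[Proposition 1.6]{O}). The composite $A\rightarrow B\xrightarrow{\dnb}\Omb$ is $k$-linear, and since $\dnb$ is an order-$n$ derivation and $A\rightarrow B$ is a ring homomorphism, this composite is an order-$n$ derivation of $A$ into $\Omb$ regarded as an $A$-module through $A\rightarrow B$ (the defining Leibniz identity transports across the homomorphism because the coefficients $x_{i_1}\cdots x_{i_s}$ act on $\Omb$ exactly through their images). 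The universal property then yields a unique $A$-linear map $\Omn\rightarrow\Omb$ sending $\dn(a)\mapsto\dnb(\bar{a})$, and extending $B$-linearly through $-\otimes_A B$ produces $\tau$ with $\tau(\dn(a)\otimes b)=b\,\dnb(\bar{a})$. Surjectivity is then immediate, since $\Omb$ is generated as a $B$-module by the elements $\dnb(\bar{a})=\tau(\dn(a)\otimes1)$.

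Next, the inclusion $\langle Im(\rho)\rangle_B\subseteq\ker\tau$ is a short computation: for $c\in J$ one has $\tau(\rho(\bar{c}))=\tau(\dn(c)\otimes1)=\dnb(\bar{c})=0$ because $\bar{c}=0$ in $B$; since $\ker\tau$ is a $B$-submodule, it contains the entire $B$-module generated by $Im(\rho)$.

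The heart of the argument is the reverse inclusion $\ker\tau\subseteq\langle Im(\rho)\rangle_B$, which I would establish by exhibiting an inverse to the induced map $\bar{\tau}\colon M\rightarrow\Omb$, where $M:=(\Omn\otimes_A B)/\langle Im(\rho)\rangle_B$. Define $\delta\colon B\rightarrow M$ by $\delta(\bar{a})=\overline{\dn(a)\otimes1}$. This is well-defined: if $a-a'\in J$ then $\dn(a)-\dn(a')=\dn(a-a')$ by $k$-linearity of $\dn$, and $\dn(a-a')\otimes1=\rho(\overline{a-a'})\in Im(\rho)$, so the two classes agree in $M$. The crucial step is to check that $\delta$ is an order-$n$ derivation; granting this, the universal property supplies a $B$-linear map $g\colon\Omb\rightarrow M$ with $g(\dnb(\bar{a}))=\overline{\dn(a)\otimes1}$, and one checks on generators that $\bar{\tau}\circ g$ and $g\circ\bar{\tau}$ are both the identity. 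Hence $\bar{\tau}$ is an isomorphism, which is precisely the assertion $\ker\tau=\langle Im(\rho)\rangle_B$.

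I expect the main obstacle to be verifying that $\delta$ is genuinely an order-$n$ derivation into the quotient $M$. The order-$n$ Leibniz identity mixes products of up to $n+1$ factors with coefficients acting on the module, and one must confirm that the discrepancy between $\delta(\overline{x_0\cdots x_n})$ and the right-hand side of the defining identity lands in $\langle Im(\rho)\rangle_B$ and therefore vanishes in $M$. Once the bookkeeping of the three module structures involved (the $A$-action on $\Omn$ before tensoring, the $B$-action after $-\otimes_A B$, and the passage to the quotient) is handled carefully, the remainder of the verification is formal.
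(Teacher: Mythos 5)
The paper offers no proof of this theorem — it is quoted directly from Nakai \cite[Theorem II-14]{N} — and your argument is the standard universal-property proof that the reference itself uses, so it is correct and there is nothing substantive to compare. The one step you defer, namely that $\delta(\bar a)=\overline{\dn(a)\otimes 1}$ is an order-$n$ derivation of $B$ into $(\Omn\otimes_A B)/\langle Im(\rho)\rangle_B$, is even easier than you anticipate: choosing lifts $x_0,\dots,x_n\in A$ of $\bar x_0,\dots,\bar x_n$ and lifting the product to $x_0\cdots x_n$, the order-$n$ Leibniz identity for $\dn$ tensored with $1$ holds on the nose already in $\Omn\otimes_A B$ (the coefficients $x_{i_1}\cdots x_{i_s}$ slide across the tensor product over $A$ to become $\bar x_{i_1}\cdots \bar x_{i_s}$), so there is no discrepancy to absorb into $\langle Im(\rho)\rangle_B$; the quotient is needed only for the well-definedness of $\delta$ under change of lifts, which you did verify.
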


Our first goal is to describe explicitly $\langle Im(\rho)\rangle_B$ if $A$ is a polynomial ring and generators of the ideal $J$ 
are known. So, for the rest of this section, $A=\Kx$, $J=\langle f_1,\ldots,f_r \rangle$, and $B=A/J$. First we describe $Im(\rho)$.

\begin{lem}\label{im rho}
With the previous notation,
$$Im(\rho)=\big\{\sum_{i=1}^r \big(\dn(g_i)\dn(f_i)\otimes 1+g_i(\dn(f_i)\otimes 1)\big)|g_1,\ldots,g_r\in A\big\}.$$
\end{lem}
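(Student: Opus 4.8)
The plan is to compute $\rho(\bar c)$ explicitly for an arbitrary $\bar c\in J/J^{n+1}$ and simply read off the description. Since $A=\Kx$ and $J=\langle f_1,\ldots,f_r\rangle$, every $c\in J$ can be written as $c=\sum_{i=1}^r g_if_i$ with $g_i\in A$, and by the $k$-linearity of $\dn$ and of the tensor product we have $\rho(\bar c)=\dn(c)\otimes 1=\sum_{i=1}^r\dn(g_if_i)\otimes 1$. Thus the whole statement reduces to a product rule for $\dn(g_if_i)$ together with the effect of tensoring with $B$.

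The first step is to establish the corrected Leibniz identity in $\Omn$: for $g,f\in A$,
$$\dn(gf)=g\,\dn(f)+f\,\dn(g)+\dn(g)\dn(f).$$
To prove it I would work one level up in $A\otimes_k A$ and set $\delta(a):=1\otimes a-a\otimes 1\in I_A$, so that $\dn(a)$ is the class of $\delta(a)$ modulo $I_A^{n+1}$. A direct expansion of $(1\otimes g-g\otimes 1)(1\otimes f-f\otimes 1)$, using that the $A$-module structure is left multiplication $a=a\otimes 1$, yields $\delta(gf)=g\,\delta(f)+f\,\delta(g)+\delta(g)\delta(f)$; reducing modulo $I_A^{n+1}$ gives the identity above. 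Here $\dn(g)\dn(f)$ denotes the class of $\delta(g)\delta(f)\in I_A^2\subseteq I_A$, which is well defined in $\Omn=I_A/I_A^{n+1}$ since $I_A^{n+1}\subseteq I_A^2$ for $n\geq 1$; this is the same multiplication already used implicitly in the basis $\{(\dn(x))^\alpha\}$.

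With the product formula in hand I would substitute into $\rho(\bar c)=\sum_i\dn(g_if_i)\otimes 1$ to obtain
$$\rho(\bar c)=\sum_{i=1}^r\big(g_i\,\dn(f_i)+f_i\,\dn(g_i)+\dn(g_i)\dn(f_i)\big)\otimes 1.$$
Tensoring with $B=A/J$ now kills the middle terms: since $\big(f_i\,\dn(g_i)\big)\otimes 1=\dn(g_i)\otimes\bar{f_i}$ and $\bar{f_i}=0$ in $B$ because $f_i\in J$, each summand $f_i\,\dn(g_i)\otimes 1$ vanishes. This leaves exactly
$$\rho(\bar c)=\sum_{i=1}^r\big(\dn(g_i)\dn(f_i)\otimes 1+g_i(\dn(f_i)\otimes 1)\big),$$
which gives the inclusion $Im(\rho)\subseteq$ right-hand side. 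The reverse inclusion comes from the same computation read in reverse: given arbitrary $g_1,\ldots,g_r\in A$, the element $c:=\sum_ig_if_i$ lies in $J$ and $\rho(\bar c)$ equals the displayed expression, so every element of the right-hand side lies in $Im(\rho)$.

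The main obstacle is purely the bookkeeping of the corrected Leibniz rule and the verification that the product term descends to a well-defined class in $\Omn$; once that identity is clean, the vanishing of the $f_i$-terms after tensoring with $B$ is immediate, and the non-uniqueness of the representation $c=\sum g_if_i$ causes no difficulty, since both inclusions are extracted from the single identity above.
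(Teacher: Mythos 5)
Your proof is correct and follows essentially the same route as the paper: the same Leibniz-type identity $\dn(gf)=\dn(g)\dn(f)+g\,\dn(f)+f\,\dn(g)$ obtained by expanding $(1\otimes g-g\otimes1)(1\otimes f-f\otimes1)$ in $A\otimes_kA$, the same vanishing of the terms $f_i\,\dn(g_i)\otimes1$ by moving $f_i$ across the tensor over $A$, and the same reversal for the opposite inclusion. Your added remark that $\dn(g)\dn(f)$ is well defined in $\Omn$ because $I_A^{n+1}\subseteq I_A^2$ is a small but welcome precision the paper leaves implicit.
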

\begin{proof}
For any $a,b\in A$, the following identity holds in $A\otimes_k A$:
$$1\otimes ab-ab\otimes1=(1\otimes a-a\otimes1)(1\otimes b-b\otimes1)+a(1\otimes b-b\otimes1)+b(1\otimes a-a\otimes1).$$
In particular, for any $a,b\in A$,
\begin{equation}\label{identity}\notag
\dn(ab)=\dn(a)\dn(b)+a\dn(b)+b\dn(a).
\end{equation}
Now we proceed to prove the lemma. Let $h=\sum_{i=1}^r g_if_i\in J$. Then
\begin{align}
\rho(\bar{h})=&\sum_i \rho(\overline{g_if_i})=\sum_i \dn(g_if_i)\otimes1\notag\\
=&\sum_i \dn(g_i)\dn(f_i)\otimes1+g_i(\dn(f_i)\otimes1)+f_i(\dn(g_i)\otimes1).\notag
\end{align}
To show that $\rho(\bar{h})$ is in the right-hand side of the claimed equality, we only need to show that $f_i(\dn(g_i)\otimes1)=0$ 
for each $i$. Since the tensor is over $A$ we have:
$$f_i(\dn(g_i)\otimes1)=\dn(g_i)\otimes\bar{f_i}=\dn(g_i)\otimes0=0.$$
The other inclusion is shown just by reversing the previous argument.
\end{proof}

Now we want to give a more precise description of $\langle Im(\rho)\rangle_B$. We need to introduce some notation. 
For $\beta\in\N^s$ such that $0\leq|\beta|\leq n-1$, denote
\begin{equation}\label{efes}
F_{\beta}^i:=(\dn(x))^{\beta}\dn(f_i).
\end{equation}

\begin{exam}\label{efes cusp}
Let $f=x_1^3-x_2^2\in A=k[x_1,x_2]$. Let $n=2$. Then by (\ref{Taylor}) and recalling that $\Omega^{(2)}_{A/k}=I_A/I_A^3$:
\begin{align}
F_{(0,0)}&=(d^2_A(x))^{(0,0)}d^2_A(f)=3x_1^2d^2_A(x_1)-2x_2d^2_A(x_2)+3x_1d^2_A(x_1)^2-d^2_A(x_2)^2,\notag\\
F_{(1,0)}&=(d^2_A(x))^{(1,0)}d^2_A(f)=3x_1^2d^2_A(x_1)^2-2x_2d^2_A(x_1)d^2_A(x_2),\notag\\
F_{(0,1)}&=(d^2_A(x))^{(0,1)}d^2_A(f)=3x_1^2d^2_A(x_1)d^2_A(x_2)-2yd^2_A(x_2)^2.\notag
\end{align}
\end{exam}

Let us describe more precisely the elements $F^i_{\beta}$. Fix $\beta\in\N^s$ such that $0\leq|\beta|\leq n-1$. 
By definition and (\ref{Taylor}):
$$F_{\beta}^i:=(\dn(x))^{\beta}\dn(f_i)=\sum_{\substack{\gamma\in\N^s\\1\leq|\gamma|\leq n}}
\frac{1}{\gamma!}\frac{\partial^{\gamma}(f_i)}{\partial x^{\gamma}}(\dn(x))^{\gamma+\beta}.$$
By making $\alpha=\gamma+\beta$ and recalling that $\Omn=I_A/I_A^{n+1}$ we obtain
\begin{align}
F_{\beta}^i=&\sum_{\substack{\alpha\in\N^s \\ 1+|\beta|\leq|\alpha|\leq n+|\beta|,\\ \beta\leq\alpha,\beta\neq\alpha}}
\frac{1}{(\alpha-\beta)!}\frac{\partial^{\alpha-\beta}(f_i)}{\partial x^{\alpha-\beta}}(\dn(x))^{\alpha}\notag\\
=&\sum_{\substack{\alpha\in\N^s \\ 1+|\beta|\leq|\alpha|\leq n,\\ \beta\leq\alpha,\beta\neq\alpha}}
\frac{1}{(\alpha-\beta)!}\frac{\partial^{\alpha-\beta}(f_i)}{\partial x^{\alpha-\beta}}(\dn(x))^{\alpha}\notag
\end{align}
(here $\beta\leq\alpha$ means $\beta_i\leq\alpha_i$ for each $i\in\{1,\ldots,s\}$). To simplify the previous expression, 
we define
\[\frac{1}{(\alpha-\beta)!}\frac{\partial^{\alpha-\beta}(f_i)}{\partial x^{\alpha-\beta}}=0, \mbox{ whenever }
\left\{
\begin{array}{rll}\label{star}
&|\alpha|<1+|\beta|,\\
&\alpha_i<\beta_i \mbox{ for some }i, \\ \tag{*}
&\alpha=\beta.
\end{array}
\right.
\]
With this convention, the expression for $F^i_{\beta}$ is written more simply as:
\begin{equation}\label{efes2}
F_{\beta}^i=\sum_{\substack{\alpha\in\N^s\\1\leq|\alpha|\leq n}}
\frac{1}{(\alpha-\beta)!}\frac{\partial^{\alpha-\beta}(f_i)}{\partial x^{\alpha-\beta}}(\dn(x))^{\alpha}.
\end{equation}

\begin{lem}\label{F betas}
Let $\ml\subset\Omn\otimes_A B$ be the $B$-submodule generated by the set 
$\{F_{\beta}^i\otimes1|0\leq|\beta|\leq n-1,i=1,\ldots,r\}$. Then $\ml=\langle Im(\rho)\rangle_B$.
\end{lem}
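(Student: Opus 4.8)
The plan is to establish the two inclusions $\langle Im(\rho)\rangle_B\subseteq\ml$ and $\ml\subseteq\langle Im(\rho)\rangle_B$ separately, using the description of $Im(\rho)$ from Lemma \ref{im rho} together with the Taylor-type formula (\ref{Taylor}) for the canonical derivation. A preliminary observation that I would isolate and use repeatedly is the truncation identity $(\dn(x))^{\delta}\dn(f_i)=0$ in $\Omn$ whenever $|\delta|\geq n$: a representative of $(\dn(x))^{\delta}$ lies in $I_A^{|\delta|}$ and $\dn(f_i)$ is represented by an element of $I_A$, so the product lies in $I_A^{|\delta|+1}\subseteq I_A^{n+1}$, hence vanishes in $\Omn=I_A/I_A^{n+1}$. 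This is what confines all relevant Taylor expansions to the range $1\leq|\delta|\leq n-1$ where the $F_\delta^i$ are defined.

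For the inclusion $\langle Im(\rho)\rangle_B\subseteq\ml$, I would take a typical generator of $Im(\rho)$ as given by Lemma \ref{im rho}, namely $\sum_i\big(\dn(g_i)\dn(f_i)\otimes1+g_i(\dn(f_i)\otimes1)\big)$, and rewrite each summand in terms of the $F_\beta^i$. The term $g_i(\dn(f_i)\otimes1)$ is simply $\overline{g_i}(F_0^i\otimes1)$, since scalars from $A$ pass through the tensor product over $A$. For the term $\dn(g_i)\dn(f_i)\otimes1$, I would substitute the expansion $\dn(g_i)=\sum_{1\leq|\delta|\leq n}\frac{1}{\delta!}\frac{\partial^{\delta}g_i}{\partial x^{\delta}}(\dn(x))^{\delta}$, giving $\dn(g_i)\dn(f_i)=\sum_\delta\frac{1}{\delta!}\frac{\partial^{\delta}g_i}{\partial x^{\delta}}F_\delta^i$, and then apply the truncation identity to drop all terms with $|\delta|\geq n$. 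This exhibits the chosen generator as a $B$-linear combination of the $F_\delta^i\otimes1$, so it lies in $\ml$.

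The reverse inclusion $\ml\subseteq\langle Im(\rho)\rangle_B$ amounts to showing each generator $F_\beta^i\otimes1$ lies in $\langle Im(\rho)\rangle_B$, and I would prove this by induction on $|\beta|$. The base case $\beta=0$ is immediate: choosing $g_i=1$ and $g_j=0$ for $j\neq i$ in Lemma \ref{im rho} yields $\dn(1)\dn(f_i)\otimes1+1\cdot(\dn(f_i)\otimes1)=F_0^i\otimes1$, using $\dn(1)=0$. For the inductive step I would feed $g_i=x^{\beta}$ (and $g_j=0$ otherwise) into Lemma \ref{im rho}. Expanding $\dn(x^{\beta})$ by (\ref{Taylor}), the coefficient of $(\dn(x))^{\beta}$ equals $\frac{1}{\beta!}\frac{\partial^{\beta}x^{\beta}}{\partial x^{\beta}}=1$, so the resulting element of $Im(\rho)$ equals $F_\beta^i\otimes1$ plus a $B$-combination of terms $F_\delta^i\otimes1$ with $\delta\leq\beta$, $\delta\neq\beta$ (together with the $F_0^i\otimes1$ contribution from $g_i(\dn(f_i)\otimes1)$). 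Every such lower term has $|\delta|<|\beta|$ and hence lies in $\langle Im(\rho)\rangle_B$ by the induction hypothesis; solving for $F_\beta^i\otimes1$ then places it in $\langle Im(\rho)\rangle_B$ as well.

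I expect the second inclusion to be the main obstacle, precisely because the order-$n$ canonical derivation is not an ordinary derivation: $\dn(x^{\beta})$ is a full Taylor polynomial, so feeding $x^{\beta}$ into $\rho$ produces $F_\beta^i\otimes1$ entangled with all the lower $F_\delta^i\otimes1$. The induction on $|\beta|$ is what disentangles these, and it is essential that the leading coefficient is exactly $1$ (which holds in any characteristic once the symbol $\frac{1}{\beta!}\frac{\partial^{\beta}}{\partial x^{\beta}}$ is read as the coefficient of $t^{\beta}$) so that $F_\beta^i\otimes1$ can be solved for. The truncation identity is the other delicate point, since without it the Taylor sums would range outside the values of $\delta$ for which the $F_\delta^i$ are defined.
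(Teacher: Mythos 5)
Your proposal is correct. The first inclusion $\langle Im(\rho)\rangle_B\subseteq\ml$ is handled exactly as in the paper: reduce to the generators given by Lemma \ref{im rho}, observe that $g_i(\dn(f_i)\otimes1)=\overline{g_i}(F^i_{0}\otimes1)$, and expand $\dn(g_i)\dn(f_i)$ by (\ref{Taylor}), with the terms of top degree $|\delta|=n$ killed because $(\dn(x))^{\delta}\dn(f_i)\in I_A^{n+1}$. For the reverse inclusion the two arguments genuinely diverge. The paper invokes Nakai's result that $\{\dn(x^{\alpha})\mid 1\leq|\alpha|\leq n\}$ is an alternative basis of $\Omn$, writes $(\dn(x))^{\beta}=\sum_{\alpha}g_{\alpha}\dn(x^{\alpha})$, and then shows each $g_{\alpha}\dn(x^{\alpha})\dn(f_i)\otimes1$ lies in $\langle Im(\rho)\rangle_B$ by combining the two elements $\dn(x^{\alpha})\dn(f_i)\otimes1+x^{\alpha}\dn(f_i)\otimes1$ and $x^{\alpha}\dn(f_i)\otimes1$ of $\langle Im(\rho)\rangle_B$. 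You instead feed $g_i=x^{\beta}$ directly into the description of $Im(\rho)$ and run an induction on $|\beta|$, exploiting that $\dn(x^{\beta})=(\dn(x))^{\beta}+\sum_{\delta<\beta}\binom{\beta}{\delta}x^{\beta-\delta}(\dn(x))^{\delta}$ is unitriangular with leading coefficient $1$ in every characteristic. Your route is self-contained (it replaces the citation to the change-of-basis fact by an explicit triangularity computation, which is in effect a proof of the relevant half of that fact), at the cost of being slightly longer; the paper's route is shorter but leans on the external basis statement. Both are sound, and your attention to the characteristic-$p$ reading of $\frac{1}{\beta!}\frac{\partial^{\beta}}{\partial x^{\beta}}$ and to the truncation in $I_A/I_A^{n+1}$ addresses precisely the points where a naive argument could go wrong.
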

\begin{proof}
Let us start with the inclusion $\langle Im(\rho)\rangle_B\subset \ml$. It is enough to show that $Im(\rho)\subset \ml$.
First notice that, for $g\in A$ and $i\in\{1,\ldots,r\}$,
$$g(\dn(f_i)\otimes1)=g(F^i_{(0,\ldots,0)}\otimes1)=F^i_{(0,\ldots,0)}\otimes\bar{g}=\bar{g}(F^i_{(0,\ldots,0)}\otimes1)\in\ml$$ 
(here we are using the $A$-module structure of $\Omn\otimes_A B$ as well as its $B$-module structure).
Similarly, using (\ref{Taylor}) and recalling that $\Omn=I_A/I_A^{n+1}$,
\begin{align}
\dn(g)\dn(f_i)&=\Big(\sum_{1\leq|\alpha|\leq n}\frac{1}{\alpha!}
\frac{\partial^{\alpha}g}{\partial x^{\alpha}}(\dn(x))^{\alpha}\Big)\dn(f_i)\notag\\
&=\sum_{1\leq|\alpha|< n}\frac{1}{\alpha!}\frac{\partial^{\alpha}g}{\partial x^{\alpha}}(\dn(x))^{\alpha}\dn(f_i)+
\sum_{|\alpha|=n}\frac{1}{\alpha!}\frac{\partial^{\alpha}g}{\partial x^{\alpha}}(\dn(x))^{\alpha}\dn(f_i)\notag\\
&=\sum_{1\leq|\alpha|< n}\frac{1}{\alpha!}\frac{\partial^{\alpha}g}{\partial x^{\alpha}}(\dn(x))^{\alpha}\dn(f_i)
=\sum_{1\leq|\alpha|< n}\frac{1}{\alpha!}\frac{\partial^{\alpha}g}{\partial x^{\alpha}}F_{\alpha}^i.\notag
\end{align}
Therefore $\dn(g)\dn(f_i)\otimes1\in\ml$. By lemma \ref{im rho}, $Im(\rho)\subset\ml$. 

To prove the other inclusion, first notice that 
$F^i_{(0,\ldots,0)}\otimes1=\dn(f_i)\otimes1=\rho(\bar{f_i})\in Im(\rho)$. On the other hand, according to \cite[Proposition II-2]{N},
the set $\{\dn(x^{\alpha})|1\leq|\alpha|\leq n\}$ is also a basis for $\Omn$. Fix $\beta\in\N^s$ such that $1\leq|\beta|\leq n-1$.
Then we can write $(\dn(x))^{\beta}=\sum_{\alpha}g_{\alpha}\dn(x^{\alpha})$, for some $g_{\alpha}\in A$. 
We want to show that $F^i_{\beta}\otimes1=(\dn(x))^{\beta}\dn(f_i)\otimes1=
\sum_{\alpha}g_{\alpha}\dn(x^{\alpha})\dn(f_i)\otimes1\in\langle Im(\rho) \rangle_B$.

By lemma \ref{im rho} we know that $\dn(x^{\alpha})\dn(f_i)\otimes1+x^{\alpha}\dn(f_i)\otimes1\in Im(\rho)$.
Thus, as before, $g_{\alpha}(\dn(x^{\alpha})\dn(f_i)\otimes1+x^{\alpha}\dn(f_i)\otimes1)\in\langle Im(\rho)\rangle_B$.
Similarly, $\dn(f_i)\otimes1\in Im(\rho)$ implies
$g_{\alpha}(x^{\alpha}\dn(f_i)\otimes1)\in\langle Im(\rho)\rangle_B$. It follows that
$g_{\alpha}\dn(x^{\alpha})\dn(f_i)\otimes1\in\langle Im(\rho) \rangle_B$ and so $F^i_{\beta}\otimes1\in\langle Im(\rho) \rangle_B$.
\end{proof}

\begin{pro}\label{omb}
$$\Omb\cong\bigoplus_{1\leq|\alpha|\leq n}B(\dnb(x))^{\alpha}/\sum_{\substack{i=1,\ldots,r,\\0\leq|\beta|\leq n-1 }} BF_{\beta}^i.$$
\end{pro}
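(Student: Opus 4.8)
The plan is to combine the abstract isomorphism of Theorem \ref{sec ex seq} with the explicit description of $\langle Im(\rho)\rangle_B$ obtained in Lemma \ref{F betas}, after rewriting $\Omn\otimes_A B$ as a free $B$-module. By Theorem \ref{sec ex seq} and Lemma \ref{F betas} we already have $\Omb\cong(\Omn\otimes_A B)/\ml$, where $\ml$ is the $B$-submodule generated by $\{F^i_\beta\otimes1 : 0\le|\beta|\le n-1,\ i=1,\ldots,r\}$. So it remains only to put the numerator and the generators of $\ml$ into the stated coordinate form.

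First I would use that, for $A=\Kx$, the module $\Omn$ is free over $A$ with basis $\{(\dn(x))^\alpha : 1\le|\alpha|\le n\}$, as recalled in Section 1. Tensoring a free module with $B$ over $A$ yields a free $B$-module on the corresponding elementary tensors, so
$$\Omn\otimes_A B=\bigoplus_{1\le|\alpha|\le n}B\big((\dn(x))^\alpha\otimes1\big)$$
is free with basis $\{(\dn(x))^\alpha\otimes1\}$. Renaming each basis vector $(\dn(x))^\alpha\otimes1$ as the symbol $(\dnb(x))^\alpha$ identifies this free $B$-module with $\bigoplus_{1\le|\alpha|\le n}B(\dnb(x))^\alpha$; since $\tau$ sends $(\dn(x))^\alpha\otimes1$ to $(\dnb(x))^\alpha$, this identification is compatible with the surjection of Theorem \ref{sec ex seq}.

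Next I would express the generators of $\ml$ in this basis. Each $F^i_\beta$ lies in $\Omn$, and formula (\ref{efes2}) gives its expansion $F^i_\beta=\sum_{1\le|\alpha|\le n}\frac{1}{(\alpha-\beta)!}\frac{\partial^{\alpha-\beta}(f_i)}{\partial x^{\alpha-\beta}}(\dn(x))^\alpha$, with the convention on vanishing coefficients adopted just before it. Tensoring with $B$ over $A$ replaces each coefficient by its residue class in $B$, so under the identification above $F^i_\beta\otimes1$ becomes the element $\sum_{1\le|\alpha|\le n}\frac{1}{(\alpha-\beta)!}\frac{\partial^{\alpha-\beta}(f_i)}{\partial x^{\alpha-\beta}}(\dnb(x))^\alpha$, which is precisely the element denoted $F^i_\beta$ in the statement. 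Therefore $\ml$ corresponds to $\sum_{i,\beta}BF^i_\beta$, and the claimed presentation follows by quotienting.

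The step requiring the most care is the bookkeeping in the identification: one must check that the chosen renaming of the basis is consistent with $\tau$, so that the quotient really computes $\Omb$ and not merely an abstractly isomorphic module, and that passing from $A$-coefficients to $B$-coefficients under $\otimes_A B$ is exactly what produces the reduced partial derivatives appearing in the generators $F^i_\beta$. Everything else is a direct substitution of the results already established in Lemmas \ref{im rho} and \ref{F betas}.
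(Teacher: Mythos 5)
Your proposal is correct and follows essentially the same route as the paper: invoke Theorem \ref{sec ex seq} to get $\Omb\cong(\Omn\otimes_A B)/\langle Im(\rho)\rangle_B$, replace $\langle Im(\rho)\rangle_B$ by the submodule generated by the $F^i_\beta\otimes1$ via Lemma \ref{F betas}, and use freeness of $\Omn$ over $A=\Kx$ together with (\ref{efes2}) to write everything in the basis $\{(\dnb(x))^\alpha\}$. The paper's proof is just a terser version of the same argument; your extra care about compatibility with $\tau$ is a reasonable elaboration, not a divergence.
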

\begin{proof}
By theorem \ref{sec ex seq}, 
$$\Omb\cong\Omn\otimes_A B/\langle Im(\rho) \rangle_B.$$
We also know that $\Omn\cong\oplus_{1\leq|\alpha|\leq n}A(\dn(x))^{\alpha}$. The result follows from basic properties of the tensor
product and lemma \ref{F betas}.
\end{proof}

Now we want to use this result to give a presentation of the module $\Omb$ using a higher-order Jacobian matrix, which we define below.

Let $N=\binom{s+n}{s}$ and $M=\binom{s+n-1}{s}$. Denote as $\tilde{F_{\beta}^i}\in B^{N-1}$ the vector whose entries are the 
coefficients of $F_{\beta}^i$ in (\ref{efes2}). Notice that there are $rM$ such vectors.

\begin{defi}
Denote as $\Jar$ the matrix whose rows are the $rM$ vectors $\tilde{F_{\beta}^i}$. In particular, $\Jar$ is a $(rM\times N-1)$-matrix. 
We call $\Jar$ the \textit{Jacobian matrix of order} $n$ \textit{of} $J$ or the \textit{higher-order Jacobian matrix of} $J$.
\end{defi}

\begin{rem}
For $J=\langle f \rangle$, the matrix $\Jac$ was introduced in \cite{D} in relation to the higher Nash blowup of a hypersurface. 
See also \cite{DG} for a higher-order Jacobian matrix that represents a higher-order derivative of a morphism.
\end{rem}

The next theorem generalizes the relations found in \cite[Section 4.2]{D}. 

\begin{teo}\label{present}
The $B$-module $\Omb$ has the following presentation:
\begin{equation}\label{presentation}
\xymatrix{B^{rM}\ar[r]^{T}&B^{N-1}\ar[r]&\Omb\ar[r]&0},\notag
\end{equation}
where $T$ denotes the transpose of $\Jar$.
\end{teo}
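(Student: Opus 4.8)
The plan is to read the presentation directly off Proposition \ref{omb}, so that the proof becomes a straightforward translation of the quotient description there into matrix language. By that proposition,
$$\Omb\cong\bigoplus_{1\leq|\alpha|\leq n}B(\dnb(x))^{\alpha}\Big/\sum_{\substack{i=1,\ldots,r,\\0\leq|\beta|\leq n-1}}BF_{\beta}^i,$$
and the numerator is a free $B$-module. Since the multi-indices $\alpha\in\N^s$ with $1\leq|\alpha|\leq n$ number exactly $N-1$ (one fewer than the $N=\binom{s+n}{s}$ multi-indices with $0\leq|\alpha|\leq n$), fixing an ordering of the basis $\{(\dnb(x))^{\alpha}\mid 1\leq|\alpha|\leq n\}$ yields an isomorphism $\bigoplus_{1\leq|\alpha|\leq n}B(\dnb(x))^{\alpha}\cong B^{N-1}$.

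First I would transport the relation submodule along this isomorphism. Under the chosen ordering, each generator $F_{\beta}^i$ corresponds to the vector $\tilde{F_{\beta}^i}\in B^{N-1}$ of its coefficients in the expansion (\ref{efes2}), so the submodule $\sum BF_{\beta}^i$ becomes the $B$-submodule of $B^{N-1}$ generated by the $rM$ vectors $\tilde{F_{\beta}^i}$. Next I would identify this submodule with the image of $T$. By definition the rows of $\Jar$ are precisely the $\tilde{F_{\beta}^i}$, so the columns of its transpose $T$ are these same vectors; hence the $B$-linear map $T\colon B^{rM}\to B^{N-1}$ carries each standard basis vector to one of the $\tilde{F_{\beta}^i}$, and therefore $\operatorname{Im}(T)$ equals the submodule generated by the $\tilde{F_{\beta}^i}$.

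Combining the two identifications gives
$$\operatorname{coker}(T)=B^{N-1}\big/\operatorname{Im}(T)\cong\bigoplus_{1\leq|\alpha|\leq n}B(\dnb(x))^{\alpha}\Big/\sum BF_{\beta}^i\cong\Omb,$$
which is exactly the asserted exactness of $B^{rM}\xrightarrow{T}B^{N-1}\to\Omb\to 0$. There is no genuine analytic or algebraic difficulty here; the entire substance is Proposition \ref{omb}, and the one thing to watch carefully is the bookkeeping---that the transpose correctly turns the rows of the higher-order Jacobian matrix into the columns of the presenting map, and that the index ranges for $\alpha$ and for the pairs $(\beta,i)$ produce the stated dimensions $N-1$ and $rM$. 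That indexing is the single place where a count could slip, so I would state the ordering conventions explicitly before invoking them.
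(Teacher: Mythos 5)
Your proof is correct and follows exactly the route of the paper, which simply cites Proposition \ref{omb} and the construction of $\Jar$; you have merely spelled out the bookkeeping (the identification of the free module with $B^{N-1}$ and of the rows of $\Jar$ with the columns of $T$) that the paper leaves implicit. No issues.
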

\begin{proof}
The result follows from proposition \ref{omb} and the construction of the matrix $\Jar$.
\end{proof}

\begin{exam}
Let $f=x_1^3-x_2^2\in k[x_1,x_2]$, $B=k[x_1,x_2]/\langle f \rangle$ and $n=2$. By definition and example \ref{efes cusp}:
\begin{align}
Jac_2(f)=
\begin{pmatrix}
3x_1^2&		-2x_2& 	3x_1&  		 0&     		-1\\
0&  	 		0&    	3x_1^2&   	-2x_2&      	0\\
0&       		0& 		0&      		3x_1^2&   	-2x_2 \notag
\end{pmatrix}.
\end{align}
Notice that this matrix have rank 3 (independently of the characteristic of $k$). Denoting by $T$ the transpose of this matrix, 
theorem \ref{present} gives the following free resolution of $\Omd$:
$$\xymatrix{0\ar[r]&B^{3}\ar[r]^{T}&B^{5}\ar[r]&\Omd\ar[r]&0}.$$
\end{exam}

We conclude this section with two results on the higher-order Jacobian matrix that we will need in subsequent sections. 
First we generalize the usual Jacobian criterion for hypersurfaces (see, for instance, \cite[Theorem 13.10]{K}, see 
also \cite[Theorem 2.1]{D} for a particular case). We need the following lemma.

\begin{lem}\label{echelon}
Let $f\in A$. Then, by ordering increasingly the columns and rows of $\Jac$ using graded lexicographical order on $\N^s$, 
$\Jac$ is in row echelon form and has $\frac{\partial f}{\partial x_1}$ as pivots.
\end{lem}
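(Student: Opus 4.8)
The plan is to read the pivot structure directly off the explicit entries of $\Jac$. Recall from (\ref{efes2}) that, writing $\beta$ for a row index and $\alpha$ for a column index (so $0\le|\beta|\le n-1$ and $1\le|\alpha|\le n$), the $(\beta,\alpha)$-entry of $\Jac$ is $\frac{1}{(\alpha-\beta)!}\frac{\partial^{\alpha-\beta}(f)}{\partial x^{\alpha-\beta}}$, which by the convention (\ref{star}) vanishes unless $\beta\le\alpha$ componentwise and $\beta\ne\alpha$. First I would fix a row $\beta$ and locate its leftmost possibly-nonzero entry. Any column $\alpha$ with a nonzero entry satisfies $\beta\le\alpha$ and $\beta\ne\alpha$, hence $|\alpha|\ge|\beta|+1$; since the order is graded, every column of degree $\ge|\beta|+2$ comes strictly after every column of degree $|\beta|+1$, so the leading entry must sit in degree $|\beta|+1$. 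Writing $e_1,\ldots,e_s$ for the standard basis of $\N^s$, the columns of that degree dominating $\beta$ are exactly the indices $\beta+e_i$, and under the chosen order the minimum among these is $\beta+e_1$ (increasing the first coordinate produces the smallest monomial of the next degree, as exhibited by the column order $(1,0),(0,1),(2,0),(1,1),(0,2)$ in the example computed above).

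Consequently the leading entry of row $\beta$ lies in column $\beta+e_1$, and its value is $\frac{1}{(e_1)!}\frac{\partial^{e_1}(f)}{\partial x^{e_1}}=\frac{\partial f}{\partial x_1}$, as claimed; all entries of row $\beta$ strictly to the left of this column vanish, since they correspond to indices $\alpha$ smaller than $\beta+e_1$ in the order, which therefore fail the domination condition $\beta\le\alpha$, $\beta\ne\alpha$.

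Next I would verify the staircase property required for row echelon form, namely that the pivot column strictly increases with the row index. This is immediate because the order is a monomial order and hence compatible with translation: $\beta<\beta'$ implies $\beta+e_1<\beta'+e_1$, so the assignment $\beta\mapsto\beta+e_1$ sending each row to its pivot column is strictly order-preserving and injective. Thus the leading entries move strictly to the right as one descends the matrix, which is exactly the echelon condition; in particular every entry lying below a pivot vanishes automatically.

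The step to get right is the combinatorial claim that $\beta+e_1$ is the minimum, in the graded lexicographic order, among all multi-indices strictly dominating $\beta$; I would isolate this as the core of the argument and check it against the column order of the preceding example. The only caveat worth noting is that the displayed entry $\frac{\partial f}{\partial x_1}$ is a genuine pivot precisely when it is nonzero; the staircase shape of $\Jac$, and hence the echelon description, holds unconditionally.
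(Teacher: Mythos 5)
Your proof is correct and follows essentially the same route as the paper: identify the $(\beta,\beta+e_1)$-entry as $\frac{\partial f}{\partial x_1}$ via (\ref{efes2}), show every entry to its left vanishes, and note the pivot columns move strictly rightward. The only difference is that where the paper cites \cite[Lemma 1.1 (iv)]{D} for the vanishing of the entries preceding the pivot, you prove it inline by observing that any $\alpha$ preceding $\beta+e_1$ in the graded order either equals $\beta$ or fails the domination condition $\beta\le\alpha$, so the convention (\ref{star}) kills the entry --- a welcome, self-contained substitute for the citation.
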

\begin{proof}
By (\ref{efes2}), for each $\beta$ such that $0\leq|\beta|\leq n-1$, the coefficient of $\dn(x)^{\beta+(1,0,\ldots,0)}$ in $\fb$ is 
$\frac{\partial f}{\partial x_1}$. To prove the lemma we need to show that if $\alpha<_{grlex}\beta+(1,0,\ldots,0)$ then the
$(\beta,\alpha)$-entry of $\Jac$ is zero. It was proved in \cite[Lemma 1.1 (iv)]{D} that this is indeed the case if we assume 
that $\alpha\neq\beta$. Finally, according to (\ref{star}) and (\ref{efes2}), the $(\beta,\beta)$-entry of $\Jac$ is also zero.
\end{proof}

\begin{pro}\label{jacobian criterion}
Let $k$ be a perfect field. Let $f\in A$ be an irreducible polynomial. Let $J=\langle f \rangle\subset A$ and $\bar{\q}\in\Spec(A/J)$. 
Then $\big(A/J\big)_{\bar{\q}}$ is a regular local ring if and only if $\rank(\Jac\mod\q)=M$.
\end{pro}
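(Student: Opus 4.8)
The plan is to reduce the statement to the classical Jacobian criterion \cite[Theorem 13.10]{K}, which (for $k$ perfect and $f$ irreducible, so that $A/J$ is a domain of dimension $s-1$) asserts that $\big(A/J\big)_{\bar{\q}}$ is regular if and only if the ordinary Jacobian of $f$, the row $\big(\tfrac{\partial f}{\partial x_1},\ldots,\tfrac{\partial f}{\partial x_s}\big)$, has rank $1$ modulo $\q$, i.e.\ if and only if $\tfrac{\partial f}{\partial x_j}\notin\q$ for some $j$. It then suffices to prove the purely linear-algebraic equivalence
$$\rank(\Jac\bmod\q)=M\iff \frac{\partial f}{\partial x_j}\notin\q\text{ for some }j.$$
I would establish the two implications separately, using Lemma \ref{echelon} for one direction and the explicit shape of the entries in (\ref{efes2}) for the other.

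For $(\Leftarrow)$ I would suppose $\tfrac{\partial f}{\partial x_j}\notin\q$ for some $j$. Since the rows and columns of $\Jac$ are indexed by the multi-indices $\beta$ and $\alpha$ symmetrically in the variables, a permutation of $x_1,\ldots,x_s$ merely permutes the rows and columns of $\Jac$ and so leaves its rank unchanged; hence I may relabel the variables so that $j=1$. By Lemma \ref{echelon}, after ordering rows and columns by graded lexicographic order, $\Jac$ is in row echelon form with all $M$ of its pivots equal to $\tfrac{\partial f}{\partial x_1}$, the pivot of row $\beta$ sitting in column $\beta+(1,0,\ldots,0)$. These $M$ pivot columns are pairwise distinct, and the entries strictly to the left of each pivot already vanish over $A$, hence modulo $\q$. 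As $\tfrac{\partial f}{\partial x_1}\notin\q$, the reduced matrix $\Jac\bmod\q$ remains in echelon form with $M$ nonzero pivots, so $\rank(\Jac\bmod\q)=M$.

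For $(\Rightarrow)$ I would argue by contraposition: assuming $\tfrac{\partial f}{\partial x_j}\in\q$ for every $j$, I would show $\rank(\Jac\bmod\q)\le M-1$. I would single out a row index $\beta$ with $|\beta|=n-1$; at least one exists, e.g.\ $\beta=(n-1,0,\ldots,0)$. By (\ref{efes2}) together with the conventions (\ref{star}), the $(\beta,\alpha)$-entry of $\Jac$ is nonzero only when $\beta\le\alpha$ and $|\alpha|\ge 1+|\beta|=n$, which forces $|\alpha|=n$ and hence $|\alpha-\beta|=1$. Thus every entry of this row is, up to a constant, a first-order partial $\tfrac{\partial f}{\partial x_i}$, and all of these lie in $\q$. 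Consequently the entire row reduces to zero modulo $\q$, so the $M$ rows of $\Jac\bmod\q$ span a space of dimension at most $M-1$ and $\rank(\Jac\bmod\q)<M$. Combining the two implications with the classical criterion finishes the proof.

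The delicate point is the converse $(\Rightarrow)$: a priori the vanishing of the first-order partials does not force the higher-order derivatives appearing elsewhere in $\Jac$ to vanish, so one should not expect the rank to collapse completely (indeed it need not be $0$). The observation that rescues the argument is that the rows indexed by $|\beta|=n-1$ feel \emph{only} first-order derivatives, which is exactly the degree bookkeeping recorded in (\ref{efes2})--(\ref{star}); exhibiting a single identically vanishing row already suffices to drop the rank below $M$.
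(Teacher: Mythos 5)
Your proof is correct and follows essentially the same route as the paper: both directions reduce to the classical Jacobian criterion, the forward implication via the echelon form of Lemma \ref{echelon} with pivots $\frac{\partial f}{\partial x_1}$, and the converse via the observation that a row with $|\beta|=n-1$ contains only first-order partials. The only cosmetic differences are that you phrase the second direction contrapositively and pick $\beta=(n-1,0,\ldots,0)$ where the paper uses $\beta=(0,\ldots,0,n-1)$.
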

\begin{proof}
Suppose that $\big(A/J\big)_{\bar{\q}}$ is regular. Then $\rank(Jac(f)\mod\q)=1$, by the usual Jacobian criterion. Assume that
$\frac{\partial f}{\partial x_1}\mod\q\neq0$. By lemma \ref{echelon}, the matrix $\Jac$ is in row echelon form and has 
$\frac{\partial f}{\partial x_1}$ as pivots. Therefore, the rows of $\Jac\mod\q$ are linearly independent, i.e., the rank
of $\Jac\mod\q$ is $M$.

Now assume that $\rank(\Jac\mod\q)=M$. In particular, the row of $\Jac\mod\q$ corresponding to $\beta=(0,\ldots,0,n-1)$ contains 
non-zero entries. On the other hand, by construction, this particular row contains only first partial derivatives. It follows that 
$\frac{\partial f}{\partial x_i}\mod\q\neq0$ for some $i$. Then $\big(A/J\big)_{\bar{\q}}$ is a regular local ring by the usual Jacobian
criterion.
\end{proof}

\begin{pro}\label{lin indep}
Let $f\in A$ be an irreducible non-constant polynomial. The set $\{\fb|0\leq|\beta|\leq n-1\}\subset\Omn$ is 
$A$-linearly independent. 
\end{pro}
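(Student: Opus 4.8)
The plan is to translate the statement into a rank condition on the higher-order Jacobian matrix and then exploit the echelon structure of Lemma \ref{echelon}. Since $\Omn$ is free with basis $\{(\dn(x))^{\alpha}\mid 1\leq|\alpha|\leq n\}$, a relation $\sum_{\beta}g_{\beta}\fb=0$ with $g_{\beta}\in A$ is, read coefficient by coefficient in that basis, exactly a linear dependence among the rows of the $M\times(N-1)$ matrix whose $\beta$-th row is the coefficient vector of $\fb$ in $(\ref{efes2})$ (this is $\Jac$ lifted to $A$, before reducing modulo $J$). Because $A$ is a domain with fraction field $K=\Kxx$, such a dependence over $A$ exists if and only if one exists over $K$; hence it suffices to show that this matrix has full row rank $M$ over $K$.

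By Lemma \ref{echelon}, after ordering rows and columns by graded lexicographic order, the matrix is in row echelon form, the pivot of the row indexed by $\beta$ sitting in the column indexed by $\beta+(1,0,\ldots,0)$ and equal to $\frac{\partial f}{\partial x_1}$. The assignment $\beta\mapsto\beta+(1,0,\ldots,0)$ is injective and preserves graded lex order, so the pivot columns are distinct and increase strictly down the rows; this is genuine echelon form. Assuming for the moment that $\frac{\partial f}{\partial x_1}\neq0$, full row rank follows in the usual way: if $\sum_{\beta}g_{\beta}\fb=0$ nontrivially, pick $\beta_0$ minimal in graded lex with $g_{\beta_0}\neq0$; in the column indexed by $\beta_0+(1,0,\ldots,0)$ every row with larger index contributes $0$ (its pivot lies strictly to the right), and every row with smaller index has coefficient $0$ by minimality, so the coefficient of $(\dn(x))^{\beta_0+(1,0,\ldots,0)}$ equals $g_{\beta_0}\frac{\partial f}{\partial x_1}$. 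As $A$ is a domain this forces $g_{\beta_0}=0$, a contradiction.

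It remains to guarantee a nonzero pivot. Nothing privileges $x_1$: Lemma \ref{echelon} holds verbatim for any variable, so I only need $\frac{\partial f}{\partial x_j}\neq0$ for some $j$, after which I relabel the variables. In characteristic $0$ this is immediate since $f$ is non-constant, and this is where I expect the argument to be routine. The main obstacle is positive characteristic $p$: if all first partials vanished, then $f\in k[x_1^{p},\ldots,x_s^{p}]$, and over a perfect field each coefficient is a $p$-th power, so $f=g^{p}$ would contradict irreducibility; thus some first partial is nonzero. This is precisely where perfectness of $k$ enters, and it is genuinely needed — over an imperfect field the conclusion can fail, e.g.\ for $f=x_1^{p}-t\in\mathbb{F}_p(t)[x_1]$ with $n=p$ one computes $\fb=0$ for $|\beta|=p-1$. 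With a nonzero first partial secured, the echelon argument of the previous paragraph completes the proof.
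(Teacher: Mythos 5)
Your proof is correct and follows essentially the same route as the paper: Lemma \ref{echelon} puts the coefficient matrix of the $F_{\beta}$ in row echelon form with $\frac{\partial f}{\partial x_1}$ as pivots, and a nonzero first partial then forces the rows to be $A$-linearly independent. Your extra care in positive characteristic is well placed --- the paper's proof silently asserts that irreducibility gives a nonzero first partial, which requires $k$ perfect (a hypothesis stated in Proposition \ref{jacobian criterion} but not repeated here), and your $x_1^{p}-t$ example shows that this assumption is genuinely needed.
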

\begin{proof}
Since $f$ is irreducible then $\frac{\partial f}{\partial x_i}\neq0$ for some $i$. Assume that $i=1$. By definition, the entries of $\Jac$ 
are the coefficients of $\fb$. By lemma \ref{echelon}, this matrix is in row echelon form and has $\frac{\partial f}{\partial x_1}$ 
as pivots. The result follows.
\end{proof}


\section{Projective dimension of $\Omb$ for hypersurfaces}

In \cite{Er}, A. Erdogan studies the module of principal parts using similar ideas to the ones we developed so far for $\Omb$. 
His main theorem \cite[Theorem]{Er} states that $pd(B\otimes_k B/I_B^{n+1})\leq1$, where $B=\Kx/\langle f \rangle$, 
$char(k)=0$, and $pd(\cdot)$ denotes the projective dimension.

In this section we refine the corresponding statement for the module of differentials of order $n$ using the higher-order Jacobian matrix. 
We show that, for hypersurfaces,  $pd(\Omb)\leq1$, and it is exactly one if and only if the hypersurface is singular. In addition, our 
statement is valid also in positive characteristic.

First we need to generalize, for hypersurfaces and the module of differentials of order $n$, the well-known criterion of regularity of a ring 
in terms of $\Om$. For the rest of this section we use the following notation: 
\begin{itemize}
\item $k$ a perfect field, $A=\Kx$, $f\in A$ irreducible, $B=A/\langle f \rangle$.
\item $\mm\in\Spec_{max}(B)$, $R=B_{\mm}$, $\m$ the maximal ideal of $R$. 
\item We assume that $k$ is the residue field of $R$.
\end{itemize}

\begin{teo}\label{regular}
$R$ is a regular ring if and only if $\Omr$ is a free $R$-module of rank $L-1$, where $L=\binom{s-1+n}{s-1}$.
\end{teo}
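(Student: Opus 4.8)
The plan is to read off the equivalence directly from the explicit presentation of Theorem~\ref{present} after localizing at $\mm$. Since the formation of $\Omb$ commutes with localization, we have $\Omr\cong(\Omb)_{\mm}$, and localizing the presentation of Theorem~\ref{present} in the hypersurface case $r=1$ produces an exact sequence
\[
\xymatrix{R^{M}\ar[r]^{T}&R^{N-1}\ar[r]&\Omr\ar[r]&0},
\]
where $T$ is the transpose of $\Jac$ viewed over $R$, and $N-1$, $M$ are the numbers of columns and rows of $\Jac$. The numerical heart of the statement is a single application of Pascal's rule: $(N-1)-M=\binom{s+n}{s}-1-\binom{s+n-1}{s}=\binom{s+n-1}{s-1}-1=L-1$. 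Thus $L-1$ is exactly the rank one obtains once the $M$ relations become $R$-linearly independent and split off.

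For the forward implication, suppose $R$ is regular and write $\q$ for the preimage in $A$ of $\mm$. Proposition~\ref{jacobian criterion} gives $\rank(\Jac\bmod\q)=M$, and after relabelling the variables I may assume $\frac{\partial f}{\partial x_1}\notin\q$, so that $\frac{\partial f}{\partial x_1}$ is a unit in $R$. By Lemma~\ref{echelon}, ordering rows and columns by graded lexicographic order puts $\Jac$ in row echelon form with every pivot equal to $\frac{\partial f}{\partial x_1}$; over the local ring $R$ these pivots are units, so the $M\times M$ submatrix of $\Jac$ on the pivot columns is triangular with units on the diagonal, hence invertible over $R$. Reducing $\Jac$ over $R$ using these unit pivots expresses each of the $M$ pivot generators of $R^{N-1}$ in terms of the remaining $N-1-M$ generators, so $T$ is a split injection whose image is a free direct summand of rank $M$. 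Therefore $\Omr$ is free on the non-pivot generators, of rank $(N-1)-M=L-1$. I expect this freeness conclusion to be the main obstacle: the Jacobian criterion only supplies the rank of $\Jac$ over the residue field $k=R/\m$, whereas freeness of $\Omr$ (rather than merely the correct count of minimal generators) requires promoting that rank statement to a genuine splitting over $R$, and it is precisely the echelon structure with unit pivots that does so.

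For the converse, assume $\Omr$ is free of rank $L-1$. Tensoring the localized presentation with $k=R/\m$ and using right exactness gives $\Omr\otimes_R k\cong\operatorname{coker}(T\otimes_R k)$, a $k$-vector space of dimension $(N-1)-\rank(\Jac\bmod\q)$, since transposition does not change the rank. A free module of rank $L-1$ over the local ring $R$ has exactly $L-1$ minimal generators, so this dimension equals $L-1=(N-1)-M$, forcing $\rank(\Jac\bmod\q)=M$. Proposition~\ref{jacobian criterion} then yields that $R$ is regular, completing the equivalence.
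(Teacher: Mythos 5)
Your proof is correct, but it takes a genuinely different route from the paper's. For the forward direction the paper never touches the Jacobian matrix: it invokes the general fact that $\Om_R$ is free of rank $\dim R$ when $R$ is regular and climbs the filtration $0\to I_R^{n}/I_R^{n+1}\to I_R/I_R^{n+1}\to I_R/I_R^{n}\to0$ by induction, an argument valid for any regular local $k$-algebra, not only hypersurfaces. You instead extract a splitting of the presentation of Theorem~\ref{present} from Lemma~\ref{echelon}: since $\frac{\partial f}{\partial x_1}$ is a unit in $R$, the $M\times M$ pivot submatrix is triangular with unit diagonal, so $T$ is a split injection with free cokernel of rank $(N-1)-M=L-1$. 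This is sound, and it is in fact the same mechanism the paper later deploys to prove $pd(\Omb)\leq1$ in Theorem~\ref{pd}; there is no circularity, since you rely only on Lemma~\ref{echelon}, Theorem~\ref{present} and Proposition~\ref{jacobian criterion}, all of which precede the statement. For the converse the divergence is sharper: the paper passes to $\Omr\otimes_Rk\cong\m/\m^{n+1}$ and proves, by a combinatorial Hilbert-function lemma, that $\dim_k\m/\m^{n+1}=L-1$ forces $\dim_k\m/\m^{2}=s-1$. You bypass that lemma entirely: counting minimal generators of the free module against the localized presentation forces $\rank(\Jac\bmod\q)=M$, and Proposition~\ref{jacobian criterion} closes the loop. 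Your argument is shorter and stays entirely inside the Jacobian formalism developed in Section 2; the paper's converse is intrinsic (independent of any chosen presentation), and its combinatorial lemma yields the sharper quantitative fact that any defect in $\dim_k\m/\m^{2}$ already inflates $\dim_k\m/\m^{n+1}$, which has some independent interest.
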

\begin{proof}
The ``only if" part is well-known and holds in full generality: if $R$ is a regular ring then $\Om_R$ is free of rank $\dim R$; 
using the exact sequences
$$0\rightarrow I_R^{n}/I_R^{n+1}\rightarrow I_R/I_R^{n+1}\rightarrow I_R/I_R^{n}\rightarrow0,$$
the proof follows by induction. 

Now suppose that $\Omr$ is a free $R$-module of rank $L-1$. As in the case of $\Om_R$, it is known that
$\Omr\otimes_R k\cong\m/\m^{n+1}$ (this fact follows from theorem \ref{sec ex seq}). Thus $\m/\m^{n+1}\cong k^{L-1}$, 
and so $\dim_k\m/\m^{n+1}=L-1$. To conclude, we need to show that this fact implies $\dim_k\m/\m^2=s-1$, i.e.,
$\dim_k\m/\m^2=\dim R$. This last statement is proved in the following lemma.
\end{proof}

If $k=\C$, it was proved in \cite[Corollary 2.8]{D} that $\dim_k\m/\m^{n+1}=L-1$ implies $\dim_k\m/\m^2=s-1$. The proof given 
there is essentially combinatorial and so it also holds for arbitrary fields. For convenience of the reader, we reproduce here the 
key arguments of that proof for an arbitrary field.

\begin{lem}
If $\dim_k\m/\m^{n+1}=L-1$ then $\dim_k\m/\m^2=s-1$.
\end{lem}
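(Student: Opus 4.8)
The plan is to pass to the associated graded ring of $R$ and reduce the whole statement to a single binomial identity. First I would write $R=S/\langle f\rangle$, where $S=A_{\mathfrak{N}}$ is the localization of $A$ at the maximal ideal $\mathfrak{N}\subset A$ containing $f$ that corresponds to $\mm$. Since $k$ is the residue field of $R$ and $\langle f\rangle$ lies in the maximal ideal $\mathfrak{n}$ of $S$, the field $k$ is also the residue field of $S$; hence $S$ is a regular local ring of dimension $s$ whose associated graded ring $\mathrm{gr}_{\mathfrak{n}}(S)$ is the polynomial ring $k[y_1,\ldots,y_s]$. Let $e$ be the order of $f$ at $\mathfrak{N}$ (so $f\in\mathfrak{n}^e\setminus\mathfrak{n}^{e+1}$; since $f$ is non-constant, $e\geq 1$) and let $f^*\in k[y_1,\ldots,y_s]$ be its initial form, a nonzero homogeneous polynomial of degree $e$. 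The key structural step is the identification $\mathrm{gr}_{\m}(R)\cong k[y_1,\ldots,y_s]/\langle f^*\rangle$. This rests on the fact that the initial ideal of the principal ideal $\langle f\rangle$ is itself principal, generated by $f^*$: because $\mathrm{gr}_{\mathfrak{n}}(S)$ is an integral domain, the initial form of any product $gf$ equals $g^*f^*$, so every initial form arising from $\langle f\rangle$ is a multiple of $f^*$. I expect this identification to be the main obstacle; everything after it is bookkeeping. I note that this step uses no division and is therefore characteristic-free, which is exactly what allows the argument to run over an arbitrary perfect field.

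With the associated graded ring in hand, the Hilbert function of $R$ is immediate. Since multiplication by $f^*$ is injective on the domain $k[y_1,\ldots,y_s]$, the degree-$i$ piece of the quotient has dimension
$$\dim_k \m^i/\m^{i+1}=\binom{s-1+i}{s-1}-\binom{s-1+i-e}{s-1},$$
with the convention that a binomial coefficient whose upper index is smaller than $s-1$ is zero. Summing over $i=1,\ldots,n$ and applying the hockey-stick identity to each of the two terms gives
$$\dim_k \m/\m^{n+1}=\binom{s+n}{s}-1-\binom{s+n-e}{s},$$
again with $\binom{a}{s}=0$ for $a<s$.

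Finally I would set this equal to the hypothesis $L-1=\binom{s-1+n}{s-1}-1$ and simplify. Using Pascal's rule in the form $\binom{s+n}{s}-\binom{s-1+n}{s-1}=\binom{s+n-1}{s}$, the equation collapses to
$$\binom{s+n-1}{s}=\binom{s+n-e}{s}.$$
Because $n\geq 1$, the left-hand side is strictly positive, and the map $a\mapsto\binom{a}{s}$ is strictly increasing on $\{s,s+1,\ldots\}$ and vanishes below $s$; hence the only solution is $e=1$. Substituting $e=1$ into the degree-one piece of the Hilbert function yields $\dim_k\m/\m^2=\binom{s}{s-1}-\binom{s-1}{s-1}=s-1$, which is the desired conclusion.
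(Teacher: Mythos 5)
Your proof is correct, and it takes a genuinely different route from the paper's. The paper establishes only an inequality: it bounds $\dim_k\m^{n}/\m^{n+1}$ from below by $\binom{s-2+n}{s-2}$ via a combinatorial count for monomial hypersurfaces, transfers this to general $f$ through the equality of Hilbert functions of $A/\langle f\rangle$, $A/\langle f_0\rangle$ and $A/\langle \mathrm{in}_>(f_0)\rangle$, and then runs a contradiction-plus-induction argument on the exact sequence $0\rightarrow\m^{n}/\m^{n+1}\rightarrow\m/\m^{n+1}\rightarrow\m/\m^{n}\rightarrow0$. You instead stop at the initial form $f^{*}$ rather than degenerating all the way to a monomial ideal: since $\mathrm{gr}_{\mathfrak{n}}(S)$ is a domain, the initial ideal of the principal ideal $\langle f\rangle$ is the principal ideal $\langle f^{*}\rangle$ (this is the one point that genuinely needs an argument, and you supply it correctly --- for a non-principal ideal the initial forms of the generators would not suffice), which yields the exact Hilbert function $\dim_k\m/\m^{n+1}=\binom{s+n}{s}-1-\binom{s+n-e}{s}$ in terms of the order $e$ of $f$. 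This buys you more than the lemma asks for: the hypothesis forces $e=1$ via a clean binomial identity, so you in fact characterize exactly when $\dim_k\m/\m^{n+1}=L-1$ and recover the multiplicity from the higher cotangent spaces, whereas the paper's inequality-based argument only rules out $\dim_k\m/\m^{2}>s-1$. Both arguments are characteristic-free, and both ultimately rest on the same principle that Hilbert functions are preserved under passing to initial data; yours is the more quantitative instance of it.
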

\begin{proof}
We divide the proof into several steps.\\
\textit{Step 1:} Let $\ida=\langle x^{\gamma}\rangle\subset A$, where
$x^{\gamma}=x_1^{\gamma_1}\cdots x_s^{\gamma_s}$ and assume $\gamma_1>0$. Let 
$\mm=\langle x_1,\ldots,x_s\rangle\subset A/\ida$. We claim that
\begin{equation}\label{symm}
\dim_k \mm^{n}/\mm^{n+1}\geq\binom{s-2+n}{s-2}.
\end{equation}
Let $l=|\gamma|$. If $n<l$,  $\dim_k \mm^{n}/\mm^{n+1}=\binom{s-1+n}{s-1}$ and the statement holds. Now suppose that 
$n=l+j$, $j\geq0$. Let $C=\{x^{\alpha}\notin\ida||\alpha|=l+j\}$. The image of $C$ generates $\mm^{l+j}/\mm^{l+j+1}$ and it is
linearly independent since $\ida$ is a monomial ideal. Let $L_j:=|\{x^{\alpha}\in\ida||\alpha|=l+j\}|$. Notice that 
$|C|=\binom{s-1+l+j}{s-1}-L_j.$ On the other hand, the set of monomials $x^{\alpha}$ such that $|\alpha|=l+j$ and $\alpha_1>0$
has cardinality $\binom{s-2+l+j}{s-1}$. Therefore (\ref{symm}) holds also for these values of $n$ since
$$\binom{s-1+l+j}{s-1}-L_j\geq\binom{s-1+l+j}{s-1}-\binom{s-2+l+j}{s-1}=\binom{s-2+l+j}{s-2}.$$
\textit{Step 2:} Let $f\in A$, $\mm=\langle x_1,\ldots,x_s\rangle\subset A/\langle f \rangle$. We claim that 
\begin{equation}\label{symf}
\dim_k \mm^{n}/\mm^{n+1}\geq\binom{s-2+n}{s-2}.
\end{equation}
This follows immediately from (\ref{symm}) and the fact that the Hilbert functions of $A/\langle f \rangle$, $A/\langle f_0 \rangle$
and $A/\langle in_>(f_0) \rangle$ coincide, where $f_0$ is the homogeneous component of $f$ of lowest degree and $>$ is any
monomial order on $A$ (see \cite[Theorem 15.26, Section 15.10.3]{E}).\\
\textit{Step 3:} Now we are ready to prove the lemma. After a change of coordinates, we can assume that 
$\mm=\langle x_1,\ldots,x_s \rangle\subset B$. It is enough to prove the lemma for $\mm/\mm^{n+1}$ since
$\mm/\mm^{n+1}\cong\m/\m^{n+1}$ (see, for instance, \cite[Lemma 2.4]{D}). Consider the exact sequence
$$0\rightarrow \mm^{n}/\mm^{n+1}\rightarrow \mm/\mm^{n+1}\rightarrow \mm/\mm^{n}\rightarrow0.$$
Suppose that $\dim_k\mm/\mm^{2}>s-1$. Using (\ref{symf}) and this exact sequence, an inductive argument 
shows that $\dim_k\mm/\mm^{n+1}>\binom{s-1+n}{s-1}-1=L-1$.
\end{proof}

\begin{rem}\label{principal}
For a $k$-algebra $A$, a closely related notion to the module of differentials $\Omn$ is the module of principal parts, 
which is defined as $A\otimes_k A/I_A^{n+1}$. These two modules are related by the following isomorphism:
\begin{equation}\label{diff princ}
A\otimes_k A/I_A^{n+1}\cong\Omn\oplus A.
\end{equation}
\end{rem}

\begin{coro}
Let $(R,\m)$ be as before. Then $R\otimes_k R/I_R^{n+1}$ is free of rank $L$ if and only if $R$ is regular.
\end{coro}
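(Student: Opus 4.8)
The plan is to deduce this Corollary directly from Theorem \ref{regular} together with the isomorphism \eqref{diff princ} recorded in Remark \ref{principal}. The module of principal parts $R\otimes_k R/I_R^{n+1}$ is, by that remark, isomorphic to $\Omr\oplus R$. Since $R$ is a local ring, $R$ is itself free of rank $1$, and a direct sum $\Omr\oplus R$ is free if and only if $\Omr$ is free; moreover, when both summands are free, the rank of the sum is the sum of the ranks. This reduces the entire statement to a bookkeeping of ranks combined with the criterion already established.

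First I would invoke Remark \ref{principal} to replace $R\otimes_k R/I_R^{n+1}$ by $\Omr\oplus R$ throughout. For the ``if'' direction, suppose $R$ is regular; by Theorem \ref{regular}, $\Omr$ is free of rank $L-1$, hence $\Omr\oplus R$ is free of rank $(L-1)+1=L$, giving exactly the claimed freeness and rank. For the ``only if'' direction, suppose $R\otimes_k R/I_R^{n+1}\cong\Omr\oplus R$ is free of rank $L$. Over the local ring $R$ a finitely generated module is free precisely when it is projective, and a direct summand of a free module is projective; thus $\Omr$, being a direct summand of the free module $\Omr\oplus R$, is projective and therefore free. To pin down its rank, I would compare minimal numbers of generators: since $R$ is local, the rank of a free module equals $\dim_k(\,\cdot\otimes_R k\,)$, and tensoring the splitting $\Omr\oplus R\cong R^{L}$ with the residue field $k$ gives $(\Omr\otimes_R k)\oplus k\cong k^{L}$, forcing $\dim_k(\Omr\otimes_R k)=L-1$, i.e. $\Omr$ is free of rank $L-1$. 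By the converse half of Theorem \ref{regular}, $R$ is regular.

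The only genuine content beyond Theorem \ref{regular} is the elementary module-theoretic fact that over a local ring a direct summand of a finitely generated free module is free, and that ranks add over direct sums; the rest is purely formal manipulation of the isomorphism \eqref{diff princ}. I therefore do not expect any real obstacle. The one point deserving a word of care is the ``only if'' direction, where one must argue that freeness of $\Omr\oplus R$ propagates to freeness of $\Omr$: this uses locality of $R$ in an essential way (the statement would fail over a general ring without projectivity implying freeness), so I would make explicit that we are working with the local ring $(R,\m)$ introduced before the Corollary.
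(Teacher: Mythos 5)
Your proposal is correct and follows essentially the same route as the paper: both directions reduce to Theorem \ref{regular} via the splitting $R\otimes_k R/I_R^{n+1}\cong\Omr\oplus R$ of Remark \ref{principal}, with the converse using that a direct summand of a free module over the local ring $R$ is projective, hence free of the forced rank $L-1$. The only cosmetic difference is that the paper passes through the localization $(\Omr)_{\m}\cong\Omega^{(n)}_{R_{\m}/k}$ and the identification $R_{\m}\cong R$ before concluding freeness, whereas you invoke the local-ring fact directly and read off the rank by tensoring with the residue field.
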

\begin{proof}
If $R$ is regular, then $\Omr$ is free of rank $L-1$. By the previous remark, $R\otimes_k R/I_R^{n+1}$ is free of rank $L$.

Now suppose that $R\otimes_k R/I_R^{n+1}$ is free of rank $L$. By (\ref{diff princ}), $\Omr$ is a projective $R$-module. In particular,
$(\Omr)_{\m}$ is a free $R_{\m}$-module. Again by (\ref{diff princ}), it follows that the rank of this free module is $L-1$. On the other
hand, by \cite[Theorem II-9]{N}, $(\Omr)_{\m}\cong\Omega^{(n)}_{R_{\m}/k}$. Since $R$ is local with maximal ideal $\m$ it follows 
that $R_{\m}\cong R$. Therefore, $\Omr$ is a free $R$-module of rank $L-1$. By theorem \ref{regular}, $R$ is regular.
\end{proof}

Now we prove the result regarding the projective dimension of $\Omb$.

\begin{teo}\label{pd}
Let $B$ be as before. Then $pd(\Omb)\leq1$. In addition, $pd(\Omb)=0$ if and only if $\V(f)$ is a non-singular hypersurface.
\end{teo}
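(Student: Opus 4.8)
The plan is to establish the inequality $pd(\Omb)\leq 1$ directly from the presentation obtained in Theorem \ref{present}, and then to characterize equality using the Jacobian criterion of Proposition \ref{jacobian criterion} together with the freeness criterion of Theorem \ref{regular}.

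For the first assertion, recall from Theorem \ref{present} that $\Omb$ sits in the exact sequence
\[
\xymatrix{B^{rM}\ar[r]^{T}&B^{N-1}\ar[r]&\Omb\ar[r]&0},
\]
which in the hypersurface case (so $r=1$) reads $B^M\xrightarrow{T}B^{N-1}\to\Omb\to0$. To bound the projective dimension by one, I would argue that the kernel $K=\ker(B^{N-1}\to\Omb)=\operatorname{im}(T)$ is a free (or at least projective) $B$-module; then $0\to K\to B^{N-1}\to\Omb\to0$ is a length-one projective resolution and $pd(\Omb)\leq1$ follows. The key input here is Lemma \ref{echelon}: since $\Jac$ is in row echelon form with the nonzero pivots $\frac{\partial f}{\partial x_1}$, the map $T$ is injective (its transpose has full row rank $M$ over the fraction field, and the echelon structure shows no $B$-linear relation among the rows can exist because $f$ is irreducible, hence $\frac{\partial f}{\partial x_1}$ is a nonzerodivisor in the domain $B$). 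Thus $\operatorname{im}(T)\cong B^M$ is free, giving the finite free resolution $0\to B^M\xrightarrow{T}B^{N-1}\to\Omb\to0$ and therefore $pd(\Omb)\leq1$.

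For the equality statement, I would proceed by the standard characterization $pd(\Omb)=0$ iff $\Omb$ is projective iff $\Omb$ is locally free. Combining the isomorphism $(\Omb)_{\bar{\q}}\cong\Omega^{(n)}_{B_{\bar{\q}}/k}$ (cited from \cite{N} as in the Corollary above) with Theorem \ref{regular} applied at each localization, local freeness of $\Omb$ of the appropriate rank is equivalent to $B_{\bar{\q}}$ being regular for every prime, i.e. to $\V(f)$ being non-singular. Concretely: if $\V(f)$ is non-singular, then by Proposition \ref{jacobian criterion} we have $\rank(\Jac\bmod\q)=M$ at every point, so the cokernel of $T$ is locally free, whence $\Omb$ is projective and $pd(\Omb)=0$. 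Conversely, if $pd(\Omb)=0$ then $\Omb$ is projective, so it is locally free; its rank at each localization must then be $L-1$ by Theorem \ref{regular}, which by Proposition \ref{jacobian criterion} forces the Jacobian to have full rank $M$ at every point, i.e. $\V(f)$ is non-singular.

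The main obstacle I anticipate is the injectivity of $T$, equivalently showing $\operatorname{im}(T)$ is free rather than merely a first syzygy module. One must verify that over the domain $B$ the rows of $\Jac$ have no nontrivial $B$-linear dependence; the echelon form from Lemma \ref{echelon} reduces this to the fact that the pivot $\frac{\partial f}{\partial x_1}$ is a nonzerodivisor, which holds because $f$ is irreducible so $B$ is a domain and, after the coordinate choice, $\frac{\partial f}{\partial x_1}\neq0$. A secondary subtlety is matching ranks: one must confirm that the generic rank of the cokernel is exactly $L-1$ and that the local-freeness criterion of Theorem \ref{regular} applies uniformly over $\Spec(B)$, which is where the passage through $(\Omb)_{\bar{\q}}\cong\Omega^{(n)}_{B_{\bar{\q}}/k}$ does the essential bookkeeping.
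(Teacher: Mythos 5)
Your proposal is correct and follows essentially the same route as the paper: the length-one free resolution $0\to B^{M}\to B^{N-1}\to\Omb\to0$ obtained from Lemma \ref{echelon} (echelon form with nonzerodivisor pivots in the domain $B$, hence $T$ injective with free image), and then the equivalence $pd=0$ iff locally free of rank $L-1$ iff regular via $(\Omb)_{\mm}\cong\Omega^{(n)}_{B_{\mm}/k}$, the generic rank computation, and Theorem \ref{regular}. You in fact spell out the injectivity of $T$ more explicitly than the paper, which only asserts that $\Jac$ having rank $M$ yields the exact sequence.
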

\begin{proof}
We can assume that $\frac{\partial f}{\partial x_1}\neq0$. By lemma \ref{echelon}, $\Jac$ is in row echelon form with 
$\frac{\partial f}{\partial x_1}$ as pivots. Therefore, $\Jac$ has rank $M$. In particular, the sequence of corollary \ref{present} 
gives place to the exact sequence
$$\xymatrix{0\ar[r]&B^{M}\ar[r]^{T}&B^{N-1}\ar[r]&\Omb\ar[r]&0}.$$
Therefore $pd(\Omb)\leq1$. For the second part of the theorem recall the following facts ($K$ denotes the quotient field of $B$) :
\begin{enumerate}
\item The $B$-module $\Omb$ is projective if and only if $\big(\Omb\big)_{\mm}$ is a free $B_{\mm}$-module for every 
maximal ideal $\mm\subset B$. In addition, in this case $\rank(\big(\Omb\big)_{\mm})=\dim_K \Omb\otimes_B K$, for 
every maximal ideal $\mm\subset B$.
\item $\dim_K \Omb\otimes_B K=L-1$ (this is just the local version of known results on the sheaf of principal parts, see 
\cite[Paragraph 16.3.7]{G} and \cite[Section 4]{LT}).
\item $\big(\Omb\big)_{\mm}\cong\Omega^{(n)}_{B_{\mm}/k}$ (\cite[Theorem II-9]{N}).
\end{enumerate}
These facts, together with theorem \ref{regular}, concludes the proof of the theorem.
\end{proof}


\section{Torsion submodule of $\Omega^{(n)}_{B_{\bar{\p}}/k}$ for hypersurfaces}

It was proved by J. Lipman (\cite[Proposition 8.1]{L}) and, independently, by S. Suzuki (\cite[Theorem]{S}), that for a (local) 
complete intersection $R$, the module $\Om_R$ has no torsion if and only if $R$ is normal. In this section we show that, using 
our higher Jacobian matrix, the strategy followed by Suzuki can be directly generalized to $\Omr$, whenever $R$ is a hypersurface. 
We conclude with some examples that illustrate further properties of the torsion submodule of the module of differentials of order $n$.


\subsection{A characterization of torsion-freness of $\Omega^{(n)}_{B_{\bar{\p}}/k}$ for hypersurfaces}

We will use the following notation throughout this section:
\begin{itemize}
\item $k$ a perfect field, $A=\Kx$, $f\in A$ irreducible.
\item $B=A/\langle f \rangle$, $K$ the quotient field of $B$.
\item $\bd:A\rightarrow \Omn\otimes_A K$, $a\mapsto \dn(a)\otimes 1$.
\item $R=B_{\bar{\p}}$, where $\bar{\p}\in\Spec(B)$.
\end{itemize}

\begin{rem}\label{free omr}
$\Omn$ is a free $A$-module with basis $\{\dn(x)^{\alpha}|\alpha\in\N^s,1\leq|\alpha|\leq n\}$. Thus, the $R$-module
$\Omn\otimes_A R$ is a free $R$-module with basis $\{\bd(x)^{\alpha}|\alpha\in\N^s,1\leq|\alpha|\leq n\}$.
\end{rem}

\begin{rem}\label{omr quot}
Since the formation of the module of differentials commutes with localization, proposition \ref{omb}
implies
$$\Omr\cong\Big(\Omn\otimes_A R\Big)/\sum_{0\leq|\beta|\leq n-1}R\ff,$$
where $\ff:=F_{\beta}\otimes1$.
\end{rem}

As we mentioned before, the following theorem was proved in \cite{L,S} for $n=1$. To prove the corresponding result for $n\geq1$,
we adapt the strategy followed in \cite{S}. 

\begin{teo}\label{tors}
Let $n\geq1$. Then $R$ is normal if and only if $\tors(\Omr)=0$, where $\tors(\cdot)$ denotes the torsion submodule.
\end{teo}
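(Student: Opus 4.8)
The statement to prove is Theorem~\ref{tors}: for $R = B_{\bar{\p}}$ a localization of the hypersurface $B = A/\langle f\rangle$, $R$ is normal if and only if $\tors(\Omr) = 0$. Since we are told to follow Suzuki's strategy from \cite{S} with the usual Jacobian matrix replaced by the higher-order Jacobian matrix $\Jac$, the plan is to characterize the torsion submodule of $\Omr$ explicitly using the presentation from Remark~\ref{omr quot}, and then match this against Serre's criterion $(R_1)+(S_2)$ for normality (recalling that $R$, being a localization of a hypersurface, is already a complete intersection, hence Cohen--Macaulay, so $(S_2)$ is automatic and normality reduces to the $(R_1)$ condition: regularity in codimension $\leq 1$).

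\medskip

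\noindent\textbf{Main steps.} First I would work rationally: base-change to the quotient field $K$ of $B$ via the map $\bd$ of the setup. By Proposition~\ref{lin indep}, the elements $\{F_\beta \mid 0\leq|\beta|\leq n-1\}$ are $A$-linearly independent in $\Omn$, and after tensoring with $K$ they remain independent (since $f$ is irreducible, $\tfrac{\partial f}{\partial x_1}\neq 0$ is a unit in $K$, and Lemma~\ref{echelon} puts $\Jac$ in row-echelon form with these nonzero pivots). Thus $\dim_K(\Omr\otimes_R K) = (N-1) - M$, confirming the generic rank. The key object is the kernel of the localization map $\Omr \to \Omr\otimes_R K$, which is precisely $\tors(\Omr)$ because $R$ is a domain. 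So the plan is to show: an element of $\Omn\otimes_A R$ represents a torsion class in $\Omr$ if and only if some nonzero multiple of it lands in the submodule $\sum_{0\leq|\beta|\leq n-1} R F_\beta$ generated by the rows of $\Jac$. The heart of the argument is to identify this torsion submodule with a ``saturation'' quotient: following Suzuki, $\tors(\Omr)$ is isomorphic to $\big(L' \cap (\text{row space over }K)\big)/L'$ where $L' = \sum_\beta R\ff$, and this intersection is controlled by the ideal of maximal minors of $\Jac$.

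\medskip

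\noindent\textbf{Linking minors to singularities.} The decisive computation is that $\tors(\Omr) = 0$ if and only if the ideal $I_M(\Jac)$ of $M\times M$ minors of the higher Jacobian matrix has height $\geq 2$ in $R$ (equivalently, its vanishing locus has codimension $\geq 2$). By Proposition~\ref{jacobian criterion}, $\rank(\Jac \bmod \q) = M$ exactly at the primes $\bar{\q}$ where $R_{\bar{\q}}$ is regular, so the locus where $\Jac$ drops rank is precisely the singular locus of $\Spec R$. Hence the minors vanish in codimension $\geq 2$ if and only if $R$ is regular in codimension $\leq 1$, which is $(R_1)$. Combining this with the automatic $(S_2)$ from the complete-intersection property gives normality. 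Concretely, I would localize at each height-one prime $\bar{\p}$: show that $\Omega^{(n)}_{R_{\bar{\p}}/k}$ is torsion-free if and only if $R_{\bar{\p}}$ is a DVR (regular), using Proposition~\ref{jacobian criterion} to equate the rank-$M$ condition with regularity, and then globalize via the standard fact that torsion-freeness is detected at codimension-one primes for a Cohen--Macaulay (hence $(S_2)$) ring.

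\medskip

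\noindent\textbf{The main obstacle.} The delicate point is the precise algebraic identification of $\tors(\Omr)$ with the cokernel-saturation governed by the maximal minors of $\Jac$ — that is, making rigorous the claim that the torsion comes exactly from rational relations among the $F_\beta$ that fail to hold integrally. In Suzuki's $n=1$ case the Jacobian has a single row and the bookkeeping is light; here $\Jac$ is an $rM \times (N-1)$ matrix (with $r=1$ for a hypersurface, so $M$ rows), and one must verify that the echelon structure from Lemma~\ref{echelon}, together with the unit pivot $\tfrac{\partial f}{\partial x_1}$ in $K$, lets us solve for $M$ of the basis coordinates and cleanly split off a free rational quotient of rank $N-1-M$. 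I expect the careful step to be checking that no extra torsion is introduced by the passage from the $(\dn(x))^\alpha$ basis to the relations $F_\beta$, and that the minor ideal $I_M(\Jac)$ indeed cuts out the non-free locus scheme-theoretically and not merely set-theoretically; the reduction to $(R_1)$ then follows, since for the $(S_2)$ ring $R$ torsion-freeness is equivalent to the reflexivity-type condition tested at codimension one.
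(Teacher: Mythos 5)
Your outline gets the forward direction essentially right: it matches Proposition \ref{pro1} of the paper, where normality gives $R=\bigcap_{ht(\q)=1}R_{\q}$, Proposition \ref{jacobian criterion} gives $\rank(\Jac\bmod\q)=M$ at each such $\q$, and Cramer's rule on an invertible $M\times M$ minor shows the coefficients $l_{\beta}/h$ of a putative torsion relation lie in every $R_{\q}$, hence in $R$. The problem is the converse, which is where the real content lies, and there your argument has a genuine gap. You claim that at a height-one prime $\q$ where $R_{\q}$ is not regular, the rank drop $\rank(\Jac\bmod\q)<M$ forces $\Omega^{(n)}_{R_{\q}/k}$ to have torsion. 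But the rank drop only shows the module is \emph{not free} over $R_{\q}$; when $R_{\q}$ is a non-regular one-dimensional local domain (exactly the situation in the non-normal case), there are plenty of torsion-free non-free modules (e.g.\ the maximal ideal), so ``not free'' does not yield ``has torsion.'' Producing an actual torsion element from the failure of normality is the hard step --- it is essentially a relative of Berger's problem --- and your proposal offers no mechanism for it. The paper fills this gap with Suzuki's conductor argument (Lemma \ref{ass} and Proposition \ref{pro2}): one takes an associated prime $\q$ of the conductor, a vector in $\ker(\Jac^t\bmod\q)$, and an element $g=e/h\in\idc'\br\setminus R_{\q}$, and combines them to manufacture an explicit element of $\Omn\otimes_A R$ that is killed by $h$ but does not lie in $\sum R\ff$, the contradiction coming from the $A$-linear independence of the $\ff$ (Proposition \ref{lin indep}).

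A secondary issue: your reduction of torsion-freeness to height-one primes (``torsion-freeness is detected at codimension-one primes for a Cohen--Macaulay ring'') is false for general modules --- $R\oplus R/\mm$ over $k[x,y]$ is a counterexample. It can be salvaged here, but only by invoking $pd(\Omb)\leq1$ (Theorem \ref{pd}) together with the Auslander--Buchsbaum formula, which shows every associated prime of $\tors(\Omr)$ has height $\leq1$; the same input would in fact also close the main gap, since over a one-dimensional local CM domain a torsion-free module of finite projective dimension must be free. That is Lipman's route, which the paper explicitly mentions as a plausible alternative but does not carry out; since you invoke neither Theorem \ref{pd} nor any depth argument, your proposal as written does not constitute a proof of the ``only if'' direction.
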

\begin{proof}
The result is proved in propositions \ref{pro1} and \ref{pro2}.
\end{proof}

\begin{pro}\label{pro1}
If $R$ is normal then $\tors(\Omr)=0$.
\end{pro}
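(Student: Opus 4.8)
The plan is to follow Suzuki's strategy, replacing the ordinary Jacobian by $\Jac$. Since $f$ is irreducible, $B$ and hence $R$ are integral domains with fraction field $K$, and for a domain $\tors(\Omr)$ is exactly the kernel of the localization map $\Omr\to\Omr\otimes_R K$. Using the presentation of Remark~\ref{omr quot}, $\Omr\cong(\Omn\otimes_A R)/\sum_{\beta}R\ff$ with $\Omn\otimes_A R$ free (Remark~\ref{free omr}). A class represented by $\xi\in\Omn\otimes_A R$ is then torsion if and only if $r\xi\in\sum_{\beta}R\ff$ for some nonzero $r\in R$, equivalently if and only if $\xi$ lies in the $K$-span of $\{\ff\}$ inside $\Omn\otimes_A K$. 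By Proposition~\ref{lin indep} the vectors $\ff$ are $K$-linearly independent, so such a $\xi$ admits a unique expression $\xi=\sum_{\beta}c_{\beta}\ff$ with $c_{\beta}\in K$, and its class in $\Omr$ vanishes precisely when every $c_{\beta}\in R$. Thus the entire problem reduces to the integrality statement: if $\sum_{\beta}c_{\beta}\ff$ has all its coordinates in $R$, then each $c_{\beta}\in R$.

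To prove this I would argue locally at height-one primes and then glue. Since $R$ is normal, Serre's criterion yields $(R_1)$ and $(S_2)$; in particular each $R_{\p}$ with $\operatorname{ht}\p=1$ is a regular local ring and $R=\bigcap_{\operatorname{ht}\p=1}R_{\p}$. Fix such a $\p$, corresponding to a prime $\bar{\q}$ of $B$ with $R_{\p}=(A/\langle f\rangle)_{\bar{\q}}$. By the higher-order Jacobian criterion (Proposition~\ref{jacobian criterion}), regularity of $R_{\p}$ forces $\rank(\Jac\mod\q)=M$, so some $M\times M$ submatrix of $\Jac$ has determinant not in $\q$, hence a unit in $R_{\p}$. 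Restricting the coordinate equations $\xi=\sum_{\beta}c_{\beta}\ff$ to the $M$ columns singled out by that submatrix produces a square linear system whose matrix is invertible over $R_{\p}$; solving it by Cramer's rule expresses each $c_{\beta}$ as an $R_{\p}$-combination of the coordinates of $\xi$, which lie in $R\subseteq R_{\p}$. Hence $c_{\beta}\in R_{\p}$ for every height-one prime $\p$.

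Intersecting over all height-one primes and invoking $R=\bigcap_{\operatorname{ht}\p=1}R_{\p}$ gives $c_{\beta}\in R$ for all $\beta$, so $\xi\in\sum_{\beta}R\ff$ and its class in $\Omr$ is zero; therefore $\tors(\Omr)=0$. The routine reductions (the torsion description and the uniqueness of the $c_{\beta}$) are immediate from results already in hand, so the crux is the local step: correctly identifying a height-one prime of $R$ with a point $\bar{\q}$ of $B$ so that Proposition~\ref{jacobian criterion} applies, and converting the full-rank condition $\rank(\Jac\mod\q)=M$ into the membership $c_{\beta}\in R_{\p}$ through an invertible maximal minor. The row-echelon structure of $\Jac$ supplied by Lemma~\ref{echelon} guarantees that such an invertible minor exists as soon as some partial derivative of $f$ is a unit, which is exactly what regularity of $R_{\p}$ provides.
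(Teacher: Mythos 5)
Your proposal is correct and follows essentially the same route as the paper's proof: both reduce to showing that the coefficients $c_\beta=l_\beta/h\in K$ lie in $R$, use normality to write $R$ as the intersection of its localizations at height-one primes, and at each such prime invoke the regularity of $R_\q$ together with Proposition \ref{jacobian criterion} to extract an invertible $M\times M$ minor of $\Jac$ and solve for the $c_\beta$ by Cramer's rule (the paper phrases this via the adjoint matrix). The only cosmetic difference is your extra appeal to Proposition \ref{lin indep} for uniqueness of the $c_\beta$, which the paper does not need since only the implication ``all $c_\beta\in R$ implies the class vanishes'' is used.
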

\begin{proof}
Let $z=\big[\sum_{1\leq|\alpha|\leq n}a_{\alpha}\bd(x)^{\alpha}\big]\in\tors(\Omr)$, where $a_{\alpha}\in R$. 
By remark \ref{omr quot}, there exists $h\in R$, $h\neq0$,  $l_{\beta}\in R$, $0\leq|\beta|\leq n-1$ such that
\begin{equation}\label{rel1}
h\big(\sum_{1\leq|\alpha|\leq n}a_{\alpha}\bd(x)^{\alpha}\big)=\sum_{0\leq|\beta|\leq n-1}l_{\beta}\ff.
\end{equation}
From this equation, to show that $z=0$ it is enough to show that $l_{\beta}/h\in R$, for every $\beta$. Since $R$ is normal, $R$ 
is the intersection of all localizations $\Rq$, where $\q\in\Spec(R)$ and $ht(\q)=1$. Therefore, it is enough to show that 
$l_{\beta}/h\in\Rq$, for every such $\q$.

According to (\ref{efes2}), for each $\beta$,
\begin{equation}\label{rel1.1}
\ff=\sum_{\substack{\alpha\in\N^s \\ 1\leq|\alpha|\leq n}}\frac{1}{(\alpha-\beta)!}\frac{\partial^{\alpha-\beta}(f)}
{\partial x^{\alpha-\beta}}\bd(x)^{\alpha}.
\end{equation}
Since the $\bd(x)^{\alpha}$'s form a basis of $\Omn\otimes_A R$, equations (\ref{rel1}) and (\ref{rel1.1}) give place to the 
following equations, for each $\alpha$,
\begin{equation}\label{rel2}
\sum_{0\leq|\beta|\leq n-1}l_{\beta}\frac{1}{(\alpha-\beta)!}
\frac{\partial^{\alpha-\beta}(f)}{\partial x^{\alpha-\beta}}=ha_{\alpha}.
\end{equation}
In other words, denoting $\Jac^t$ the transpose of $\Jac$, we have the relation
\begin{equation}\label{rel3}
\Jac^t\cdot(l_{\beta})_{0\leq|\beta|\leq n-1}=(ha_{\alpha})_{1\leq|\alpha|\leq n}.
\end{equation}

On the other hand, since $ht(\q)=1$ and $R$ is normal, $\Rq$ is a regular ring. By proposition \ref{jacobian criterion}, 
$\rank(\Jac\mod\q)=M$. In particular, there exists a $M\times M$-submatrix $L$ of $\Jac$ such that $\det(L)\mod\q\neq0$,
i.e., $\det(L)\in R\setminus\q$. Considering the adjoint matrix of $L$ and using (\ref{rel3}), we can find $r_{\beta}\in R$, for each
$0\leq|\beta|\leq n-1$, such that
$$\frac{l_{\beta}}{h}=\frac{r_{\beta}}{\det(L)}\in\Rq.$$
\end{proof}

For the ``if" part of the theorem we need the following lemma which appears in the middle of the proof of \cite[Theorem]{S}.

\begin{lem}\label{ass}
Let $R$ be as before and suppose that $R$ is not normal. Let $\br$ be its normalization.
Let $\idc=\{r\in R|r\br\subset R\}$ be the conductor ideal, $\q\subset R$ an associated prime of $\idc$, and 
$\idc'=(\idc:\q)=\{h\in R|h\q\subset\idc\}$. Then
\begin{itemize}
\item[(i)] $\Rq$ is not normal.
\item[(ii)] There exists $g\in\idc'\br$ such that $g\notin\Rq$.
\end{itemize}
\end{lem}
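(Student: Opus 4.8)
The plan is to prove the two assertions separately, with (i) reduced to a quick support computation and (ii) carrying the real content. Throughout I would use that $R$, being a localization of the finitely generated domain $B$ over a (perfect) field, is an excellent Noetherian local domain, so its normalization $\br$ is module-finite over $R$. This has two consequences I rely on: first, $\idc=\operatorname{Ann}_R(\br/R)$ with $\br/R$ finitely generated, so $\operatorname{Supp}(\br/R)=V(\idc)$; second, both the conductor and the colon ideal are compatible with localization.

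For (i), since $\q\in\operatorname{Ass}(R/\idc)\subseteq V(\idc)$ I get $\idc\subseteq\q$, hence $\q\in\operatorname{Supp}(\br/R)$ and $(\br/R)_{\q}\neq0$. As $\br_{\q}$ is the normalization of $\Rq$, this says $\Rq\neq\br_{\q}$, i.e. $\Rq$ is not normal. For (ii) I would argue by contradiction, assuming $\idc'\br\subseteq\Rq$. Localizing at $\q$ and using that $\br_{\q}$ is module-finite over $\Rq$ with conductor $\idc\Rq$, the inclusion $\idc'\br\subseteq\Rq$ extends to $\idc'\br_{\q}\subseteq\Rq$. Now $\idc'\br_{\q}$ is an ideal of $\br_{\q}$ contained in $\Rq$, so maximality of the conductor forces $\idc'\br_{\q}\subseteq\idc\Rq$. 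Since $\idc\subseteq\idc'$ gives the reverse chain $\idc\Rq\subseteq\idc'\Rq\subseteq\idc'\br_{\q}$, all of these coincide; in particular $\idc'\Rq=\idc\Rq$.

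To contradict this I would localize the hypothesis $\q\in\operatorname{Ass}(R/\idc)$: the maximal ideal $\q\Rq$ then lies in $\operatorname{Ass}(\Rq/\idc\Rq)$, so there is $y\in\Rq$ with $(\idc\Rq:y)=\q\Rq$. Thus $y\,\q\Rq\subseteq\idc\Rq$ while $y\notin\idc\Rq$, i.e. $y\in(\idc\Rq:\q\Rq)=\idc'\Rq$ but $y\notin\idc\Rq$, contradicting $\idc'\Rq=\idc\Rq$. Hence $\idc'\br\not\subseteq\Rq$, and any $g\in\idc'\br\setminus\Rq$ is the desired element.

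The hard part will be precisely this upgrade from $R$ to $\Rq$. A one-line maximality argument already shows $\idc'\br\not\subseteq R$: otherwise $\idc'\subseteq\idc'\br\subseteq\idc$, contradicting the strict inclusion $\idc\subsetneq\idc'$ (which itself follows from $\q$ being associated to $\idc$, giving some $x$ with $(\idc:x)=\q$, so $x\in\idc'\setminus\idc$). But that only yields an element outside $R$, which could still lie in the larger ring $\Rq$. Producing an element outside $\Rq$ is what forces the passage to $R_{\q}$ and the genuine use of the associated-prime hypothesis on $\q$, resting on the compatibility of the conductor and the colon ideal with localization — which in turn depend on the module-finiteness of $\br$.
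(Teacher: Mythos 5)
Your proof is correct, and both parts are carefully justified: (i) via $\operatorname{Supp}(\br/R)=V(\idc)$, and (ii) via localizing at $\q$, using that the conductor is the largest ideal of $\br_{\q}$ contained in $\Rq$ and commutes with localization (since $\br$ is module-finite over $R$), and extracting from the associated-prime hypothesis an element of $\idc'\Rq\setminus\idc\Rq$ to contradict $\idc'\Rq=\idc\Rq$. The paper itself gives no proof of this lemma -- it only cites the middle of the proof of Suzuki's theorem -- and your argument is essentially that original one, so you have in effect supplied the omitted details rather than taken a different route.
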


\begin{pro}\label{pro2}
If $\tors(\Omr)=0$ then $R$ is normal.
\end{pro}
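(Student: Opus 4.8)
The plan is to prove the contrapositive: assuming $R$ is not normal, I will construct a nonzero torsion element of $\Omr$. The strategy is to exploit Lemma \ref{ass}, which, for a non-normal $R$, produces an associated prime $\q$ of the conductor, an ideal $\idc'=(\idc:\q)$, and a crucial element $g\in\idc'\br$ with $g\notin\Rq$. The element $g$ lives in the normalization $\br$ but not in the localization $\Rq$, so it witnesses the failure of normality at height one in a usable algebraic form. The goal is to package $g$ into a cocycle that maps to a nonzero class in $\Omr$ yet is killed by a nonzero element of $R$.

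First I would fix an element $q\in\q$ with $q\neq 0$ and use $g\in\idc'\br$ to set up the key integral dependence relation. Since $g\in\br$, it satisfies a monic equation over $R$, and since $g\in\idc'\br$ we have $qg\in\idc\br\subset R$; write $h:=qg\in R$ so that $h\notin\q$ need not hold, but $h$ provides a nonzero multiplier once we arrange the torsion relation. Following Suzuki's argument, I would apply $\bd$ (the canonical order-$n$ derivation into $\Omn\otimes_A K$, which restricts to $\Omr$ after passing through the quotient of Remark \ref{omr quot}) to the integral equation satisfied by $g$. Differentiating a monic relation $g^m+c_{m-1}g^{m-1}+\cdots+c_0=0$ with the higher-order Leibniz rule of Definition \ref{sec ex seq}'s ambient calculus yields an identity in $\Omn\otimes_A K$ expressing a multiple of $\bd(g)$ in terms of the $\bd(x)^{\alpha}$. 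The element $\bd(g)$, or rather its image in $\Omr$, is the candidate torsion element: it is annihilated by a nonzero element of $R$ precisely because $g$ is integral over $R$, so some nonzero $R$-multiple of $\bd(g)$ lands in the submodule $\sum_{0\leq|\beta|\leq n-1}R\ff$ generated by the relations.

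The nontriviality of the class is where the hard work lies, and I expect this to be the main obstacle. One must show that $\bd(g)$ itself is not already in $\sum R\ff$ inside $\Omn\otimes_A R$; equivalently, that the candidate does not vanish in the quotient $\Omr$. This is exactly the point at which part (ii) of Lemma \ref{ass} is indispensable: the condition $g\notin\Rq$ prevents $\bd(g)$ from being expressible through the Jacobian relations over $\Rq$, since such an expression would force $g$ to lie in $\Rq$ via the row-echelon structure of $\Jac$ from Lemma \ref{echelon} and the regularity of $\Rq$ at height one. Concretely, I would localize the whole relation at $\q$, use that $\rank(\Jac\bmod\q)=M$ by Proposition \ref{jacobian criterion} (valid because $\Rq$ would be regular if the class vanished), and derive that $g\in\Rq$, contradicting Lemma \ref{ass}(ii). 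Hence the class is nonzero, it is torsion, and $\tors(\Omr)\neq 0$, completing the contrapositive.

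The finer bookkeeping — choosing the right multiplier $h\in R\setminus\{0\}$, verifying that the derivative of the integral equation is computed correctly via the formal higher-order derivatives (using the characteristic-$p$ convention recorded in the Remark after \eqref{Taylor}), and tracking coefficients through the basis $\{\bd(x)^{\alpha}\}$ of Remark \ref{free omr} — is routine once the skeleton above is in place, and it mirrors Suzuki's $n=1$ computation with the ordinary Jacobian replaced by $\Jac$. The only genuinely new ingredient relative to \cite{S} is that the annihilation and the non-triviality must both be read off the higher-order Jacobian matrix, and Lemma \ref{echelon} together with Proposition \ref{jacobian criterion} supplies exactly the rank control needed to make both halves of the argument go through uniformly in $n$.
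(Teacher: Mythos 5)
Your high-level skeleton (argue the contrapositive, invoke Lemma \ref{ass}, manufacture a torsion element, and use $g\notin\Rq$ to show it is nonzero) matches the paper, but the construction you propose does not work and is not the one the paper uses. The central problem is your candidate element $\bd(g)$: the element $g=e/h$ supplied by Lemma \ref{ass}(ii) lies in $\idc'\br$, hence in the normalization but in general not in $R$, so $\bd(g)$ is at best an element of $\Omn\otimes_A K$, and you give no mechanism for producing from it an honest element of $\Omn\otimes_A R$, let alone a class in $\Omr$. Moreover, the step ``differentiate the monic equation $g^m+c_{m-1}g^{m-1}+\cdots+c_0=0$'' is not routine bookkeeping for $n\geq2$: by the identity $\dn(ab)=\dn(a)\dn(b)+a\dn(b)+b\dn(a)$ used in Lemma \ref{im rho}, the element $\dn(g^m)$ is a polynomial of degree up to $n$ in $\dn(g)$, so you do not obtain a linear relation expressing an $R$-multiple of $\bd(g)$ in terms of the $\bd(x)^{\alpha}$. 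Finally, your appeal to Proposition \ref{jacobian criterion} is logically reversed: the vanishing of a single candidate class does not make $\Rq$ regular, so you cannot conclude $\rank(\Jac\bmod\q)=M$ that way; the paper needs, and uses, the opposite inequality.

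What the paper actually does is extract the torsion element from the rank drop, not from a derivative of $g$. Since $\Rq$ is not normal (Lemma \ref{ass}(i)), it is not regular, so $\rank(\Jac\bmod\q)<M$ by Proposition \ref{jacobian criterion}; hence $\Jac^t$ has a nonzero kernel vector over $Q(R/\q)$, and clearing denominators yields elements $a_{\beta}\in R$, with some $a_{\beta_0}\notin\q$, such that $\sum_{\beta}a_{\beta}\ff\in\Omn\otimes_A\q$. Multiplying by $g=e/h$ and using $g\q\subset\idc\br\subset R$ places $\sum_{\beta}ga_{\beta}\ff$ in $\Omn\otimes_A R$, say equal to $\sum_{\alpha}\lambda_{\alpha}\bd(x)^{\alpha}$; its class in $\Omr$ is killed by $h$ because $h\sum_{\beta}ga_{\beta}\ff=\sum_{\beta}ea_{\beta}\ff$ lies in the relation submodule of Remark \ref{omr quot}. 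If that class were zero, one would have $\sum_{\alpha}\lambda_{\alpha}\bd(x)^{\alpha}=\sum_{\beta}l_{\beta}\ff$, and the $A$-linear independence of the $\ff$ (Proposition \ref{lin indep}) forces $ea_{\beta}=hl_{\beta}$ for all $\beta$, whence $g=e/h=l_{\beta_0}/a_{\beta_0}\in\Rq$, contradicting Lemma \ref{ass}(ii). This kernel-vector construction is the idea missing from your proposal; without it neither the annihilation nor the nonvanishing of your candidate can be established.
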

\begin{proof}
The proof is by contradiction. Suppose that $R$ is not normal. In what follows we use the notation of lemma \ref{ass}.
By that lemma, $\Rq$ is not normal, so it is not regular. By proposition \ref{jacobian criterion}, $\rank(\Jac\mod\q)<M$. Therefore, 
there exists a nonzero vector $v=(c_{\beta}/d_{\beta})_{0\leq|\beta|\leq n-1}\in Q(R/\q)^M$ such that $v\in\ker\Jac^t$ 
($Q(R/\q)$ denotes the field of fractions of $R/\q$). Therefore, for each $\gamma$, $1\leq|\gamma|\leq n$:
\begin{equation}\label{eq1}
\sum_{\substack{0\leq|\beta|\leq n-1 \\ \beta\leq\gamma, \beta\neq\gamma}}\frac{1}{(\gamma-\beta)!}
\frac{\partial^{\gamma-\beta}(f)}{\partial x^{\gamma-\beta}}\frac{c_{\beta}}{d_{\beta}}=0.
\end{equation}
Let $d:=\prod_{0\leq|\beta|\leq n-1}d_{\beta}$ and $a_{\beta}:=dc_{\beta}/d_{\beta}\in R$. Multiplying equations (\ref{eq1}) by $d$
we obtain the following equalities in $R/\q$ for each $\gamma$:
\begin{equation}\label{eq2}
\sum_{\substack{0\leq|\beta|\leq n-1 \\ \beta\leq\gamma, \beta\neq\gamma}}
\frac{1}{(\gamma-\beta)!}\frac{\partial^{\gamma-\beta}(f)}{\partial x^{\gamma-\beta}}a_{\beta}=0.\notag
\end{equation}
In other words, the left-side of this equation is an element of $\q$. Using this fact we obtain (by making $\gamma=\alpha+\beta$ 
in the fourth row):
\begin{align}
\sum_{0\leq|\beta|\leq n-1}a_{\beta}\ff&=\sum_{0\leq|\beta|\leq n-1}a_{\beta}
\Big(\sum_{1\leq|\alpha|\leq n}\frac{1}{\alpha!}\frac{\partial^{\alpha}(f)}{\partial x^{\alpha}}\bd(x)^{\alpha+\beta}
\Big)\notag\\
&=\sum_{0\leq|\beta|\leq n-1}\sum_{1\leq|\alpha|\leq n}
\frac{1}{\alpha!}\frac{\partial^{\alpha}(f)}{\partial x^{\alpha}}\bd(x)^{\alpha+\beta}a_{\beta}\notag\\
&=\sum_{1\leq|\alpha|\leq n}\sum_{0\leq|\beta|\leq n-1}
\frac{1}{\alpha!}\frac{\partial^{\alpha}(f)}{\partial x^{\alpha}}\bd(x)^{\alpha+\beta}a_{\beta}\notag\\
&=\sum_{1\leq|\gamma|\leq n}\Big(\sum_{\substack{0\leq|\beta|\leq n-1 \\ \beta\leq\gamma, \beta\neq\gamma}}
\frac{1}{(\gamma-\beta)!}\frac{\partial^{\gamma-\beta}(f)}{\partial x^{\gamma-\beta}}a_{\beta}\Big)\bd(x)^{\gamma}
\in\Omn\otimes_A\q.\notag
\end{align}
In addition, since $v\neq0$, we have $a_{\beta_0}\neq0$ (in $R/\q$) for some $\beta_0$, i.e, $a_{\beta_0}\notin\q$.

Now let $g=e/h\in\idc'\br\setminus\Rq$ (see lemma \ref{ass}, $(ii)$). Then 
$$\sum_{0\leq|\beta|\leq n-1}ga_{\beta}\ff\in 
g\q\big(\Omn\otimes_A R\big)\subset\idc\br\big(\Omn\otimes_A R\big)\subset \Omn\otimes_A R.$$
It follows by remark \ref{free omr} that there exist $\lambda_{\alpha}\in R$ such that
\begin{equation}\label{eq3}
\sum_{0\leq|\beta|\leq n-1}ga_{\beta}\ff=\sum_{1\leq|\alpha|\leq n}\lambda_{\alpha}\bd(x)^{\alpha}.
\end{equation}
Combining (\ref{eq3}) with remark \ref{omr quot} we obtain the following equality in $\Omr$:
\begin{equation}\label{eq4}
\Big[h\sum_{\alpha}\lambda_{\alpha}\bd(x)^{\alpha}\Big]=\Big[\sum_{\beta}ea_{\beta}\ff\Big]=0.\notag
\end{equation}
Since $h\neq0$ and, by hypothesis, $\tors(\Omr)=0$, there exist $l_{\beta}\in R$ such that
$\sum_{\alpha}\lambda_{\alpha}\bd(x)^{\alpha}=\sum_{\beta}l_{\beta}\ff$. Substituting in (\ref{eq3}),
$$\sum_{0\leq|\beta|\leq n-1}ea_{\beta}\ff=\sum_{0\leq|\beta|\leq n-1}hl_{\beta}\ff.$$
Thus, $\sum_{\beta}(ea_{\beta}-hl_{\beta})\ff=0$. By proposition \ref{lin indep}, $ea_{\beta}-hl_{\beta}=0$ for each $\beta$.
In particular, for $\beta_0$:
$$g=\frac{e}{h}=\frac{l_{\beta_0}}{a_{\beta_0}}\in\Rq.$$
This is a contradiction, since $g\notin\Rq$. We conclude that $R$ is normal.
\end{proof}

\begin{coro}
$\tors(\Om_R)=0$ if and only if $\tors(\Omr)=0$ for all $n\geq1$.
\end{coro}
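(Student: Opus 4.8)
The plan is to obtain this corollary as an immediate consequence of Theorem~\ref{tors}, with no new computation required. The key observation is that $\Om_R=\Omega^{(1)}_{R/k}$, since by the defining formulas $\Om_{R/k}=I_R/I_R^{2}$ and $\Omega^{(1)}_{R/k}=I_R/I_R^{1+1}$ coincide. Thus the case $n=1$ of Theorem~\ref{tors} already reads: $R$ is normal if and only if $\tors(\Om_R)=0$.

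For the forward direction, I would assume $\tors(\Om_R)=0$. Applying the $n=1$ instance of Theorem~\ref{tors} then shows that $R$ is normal. Next, for each fixed $n\geq1$, I apply Theorem~\ref{tors} a second time: normality of $R$ forces $\tors(\Omr)=0$. Since $n$ was arbitrary, this establishes $\tors(\Omr)=0$ for all $n\geq1$. For the reverse direction, I would assume $\tors(\Omr)=0$ for all $n\geq1$ and simply specialize to $n=1$, which yields $\tors(\Om_R)=0$; note that only the hypothesis at $n=1$ is actually used here.

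I do not expect any genuine obstacle, as the entire substance is already contained in Theorem~\ref{tors}, whose common criterion ``$R$ is normal'' is precisely what links the order-one module to all the higher-order modules. The only point requiring a moment of care is the identification of $\Om_R$ with $\Omega^{(1)}_{R/k}$; once this is noted, the corollary reduces to transitivity of the equivalences through normality.
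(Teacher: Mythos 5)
Your proof is correct and is exactly the argument the paper intends: the corollary is stated without proof immediately after Theorem~\ref{tors} precisely because it follows by chaining the equivalences through normality, using the identification $\Om_R=\Omega^{(1)}_{R/k}$. No issues.
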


When $\dim R=1$, we have a new proof of a special case of the so-called Berger's conjecture 
(generalized to $n\geq1$).

\begin{coro}(cf. \cite{B})
$R$ is regular if and only if $\tors(\Omr)=0$ for every $n\geq1$. 
\end{coro}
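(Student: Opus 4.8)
The plan is to reduce the statement entirely to Theorem~\ref{tors} together with the classical fact that a one-dimensional Noetherian local domain is regular exactly when it is normal. First I would record the structural properties of $R$ that are needed: since $A=\Kx$ is Noetherian and $f$ is irreducible (hence $\langle f\rangle$ is prime in the UFD $A$), the ring $B=A/\langle f\rangle$ is a Noetherian domain, and $R=B_{\bar{\p}}$ is a localization of $B$, so it is again a Noetherian local domain. By the standing hypothesis of this subsection, $\dim R=1$.

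For the forward implication, if $R$ is regular then it is in particular integrally closed, i.e.\ normal. Applying Theorem~\ref{tors} for each $n\geq1$ then yields $\tors(\Omr)=0$ for every $n\geq1$. For the converse, suppose $\tors(\Omr)=0$ for every $n\geq1$. Fixing any single value of $n$ and invoking Theorem~\ref{tors} already shows that $R$ is normal. At this point the hypothesis $\dim R=1$ becomes decisive: a normal Noetherian local domain of dimension one is a discrete valuation ring, and therefore regular. Combining the two directions gives the claimed equivalence.

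The main obstacle here is conceptual rather than computational. Theorem~\ref{tors} only delivers the equivalence between torsion-freeness of $\Omr$ and \emph{normality}; the upgrade from normality to regularity rests solely on the one-dimensionality of $R$, where normal and regular coincide. This is precisely the point of contact with Berger's conjecture, which predicts exactly this equivalence between torsion-freeness of the module of differentials and regularity in dimension one; in higher dimension the upgrade from normal to regular fails, which explains why the result is confined to the one-dimensional case.
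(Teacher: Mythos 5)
Your argument is correct and is exactly the one the paper intends: the corollary is stated under the standing assumption $\dim R=1$ from the preceding sentence, and it follows from Theorem~\ref{tors} combined with the fact that a one-dimensional normal Noetherian local domain is regular (and conversely that regular local rings are normal). The paper leaves this deduction implicit, and your write-up fills it in along precisely those lines.
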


\begin{rem}
As we mentioned before, theorem \ref{tors} was also proved by J. Lipman for (local) complete intersections. One key
ingredient in his proof is the fact that $pd(\Om_R)\leq1$. Using our theorem \ref{pd}, it is likely that his proof could also be adapted
to prove theorem \ref{tors} for hypersurfaces and for general $n\geq1$. In addition, if theorem \ref{pd} could be generalized for 
complete intersections, it is plausible that Lipman's proof could also be adapted to this more general case.
\end{rem}

\begin{rem}
Notice that the proof of theorem \ref{tors} uses strongly the Jacobian criterion of order $n$ (proposition \ref{jacobian criterion}). This
proposition do not necessarily holds for complete intersections (see \cite[Remark 2.3]{D}). Thus, Suzuki's proof cannot be directly adapted 
for $n\geq1$ and complete intersections.
\end{rem}


\subsection{Examples}

We conclude with some examples that illustrate some other features of the torsion submodule of the module of differentials of order $n$.

\begin{exam}
Let $B=k[x_1,x_2]/\langle x_1^3-x_2^2 \rangle\cong k[t^2,t^3]$, where $k$ is a field and $char(k)\neq3$. 
According to theorem \ref{tors}, the torsion submodule of the module of differentials of order $n$ of $B$ is non-trivial, for every
$n\geq1$. In this example we exhibit an explicit torsion element of $\Omb$ for each $n$.

Recall that $\Om_B=Bdx_1\oplus Bdx_2/\langle 3t^4dx_1-2t^3dx_2\rangle_B$. Letting $m=3t^3dx_1-2t^2dx_2$, it follows that 
$t^3[m]=[0]$ in $\Om_B$. If $[m]=[0]$ then $m=h(3t^4dx_1-2t^3dx_2)$ for some $h\in B$, which is not possible. Thus, 
$[m]\neq[0]$ and $[m]\in\tors(\Om_B)$.

We claim that the following element gives place to a non-zero torsion element of $\Omb$, for all $n\geq1$:
$$m=3t^3\dnb(x_1)\dnb(x_2)^{n-1}-2t^2\dnb(x_2)^{n}.$$
First notice that, by proposition \ref{omb} and recalling that $\Omb=I_B/I_B^{n+1}$,
\begin{align}
t^3[m]&=[3t^6\dnb(x_1)\dnb(x_2)^{n-1}-2t^5\dnb(x_2)^{n}]\notag\\
&=t^2[3t^4\dnb(x_1)\dnb(x_2)^{n-1}-2t^3\dnb(x_2)^{n}]\notag\\
&=t^2[(\dnb(x_2)^{n-1})(3t^4\dnb(x_1)-2t^3\dnb(x_2))]\notag\\
&=t^2[(\dnb(x_2)^{n-1})(\dnb(f))]\notag\\
&=t^2[F_{(0,n-1)}]=[0].\notag
\end{align}
Now we show that $[m]\neq[0]$. To illustrate the strategy, let us explain the case $n=3$. Letting $f=x_1^3-x_2^2$ and using 
the isomorphism $B\cong k[t^2,t^3]$, the Jacobian matrix of order $3$ of $f$ is
\[Jac_3(f)=
\left( 
\begin{array}{cccccccccc}
&3t^4	&-2t^3	&3t^2	&0		&-1 		&1		&0		&0		&0\\
&0		&0		&3t^4	&-2t^3	&0		&3t^2	&0		&-1		&0\\
&0		&0		&0		&3t^4	&-2t^3	&0		&3t^2	&0		&-1\\
&0		&0		&0		&0		&0		&3t^4	&-2t^3	&0		&0\\
&0		&0		&0		&0		&0		&0		&3t^4	&-2t^3	&0\\
&0		&0		&0		&0		&0		&0		&0		&3t^4	&-2t^3
\end{array} 
\right).
\]
In this case, $m=3t^3d^3_B(x_1)d^3_B(x_2)^2-2t^2d(x_2)^3$. If $[m]=[0]$ then, by proposition \ref{omb}, there 
exist $h_{\beta}\in B$, $0\leq|\beta|\leq 2$ such that $m=\sum_{\beta}h_{\beta}F_{\beta}$. This gives place to the following equation
\begin{equation}\label{ec jac3}
Jac_3(f)^t\cdot(h_{\beta})_{0\leq|\beta|\leq2}=(a_{\alpha})_{1\leq|\alpha|\leq3},
\end{equation}
where $a_{\alpha}=0$ for all $\alpha\notin\{(1,2),(0,3)\}$, $a_{(1,2)}=3t^3$, and $a_{(0,3)}=-2t^2$.
Now let $L$ be the submatrix of $Jac_3(f)$ formed by the columns corresponding to the elements of 
$\{(1,0),(2,0),(1,1),(3,0),(2,1)\}$, i.e., 
\[L=
\left( 
\begin{array}{cccccc}
&3t^4	&3t^2	&0		&1		&0\\
&0		&3t^4	&-2t^3	&3t^2	&0\\
&0		&0		&3t^4	&0		&3t^2\\
&0		&0		&0		&3t^4	&-2t^3\\
&0		&0		&0		&0		&3t^4\\	
&0		&0		&0		&0		&0	
\end{array} 
\right).
\]
Then, by equation (\ref{ec jac3}), 
$$L^t\cdot(h_{\beta})_{\{\beta|0\leq|\beta|\leq2\}}=
(a_{\alpha})_{\{\alpha|1\leq|\alpha|\leq3\}\setminus\{(0,1),(0,2),(1,2),(0,3)\}}=(0).$$
By the shape of $L$, we conclude that $h_{\beta}=0$, for $0\leq|\beta|\leq2$, $\beta\neq(0,2)$. Therefore,
$m=h_{(0,2)}F_{(0,2)}=h_{(0,2)}\big(3t^4d^3_B(x_1)d^3_B(x_2)^2-2t^3d^3_B(x_2)^3\big)$. As in case $n=1$, this is 
not possible. Therefore $[m]\neq[0]$ and $[m]\in\tors(\Omega^{(3)}_{B/k})$.

To show that $[m]\neq[0]$ for every $n\geq1$, we follow the same strategy. If $[m]=[0]$ then there exist $h_{\beta}\in B$,
$0\leq|\beta|\leq n-1$ such that $m=\sum_{\beta}h_{\beta}F_{\beta}$. 
This gives place to the following equation
\begin{equation}\label{ec jacn}
Jac_n(f)^t\cdot(h_{\beta})_{0\leq|\beta|\leq n-1}=(a_{\alpha})_{1\leq|\alpha|\leq n},
\end{equation}
where $a_{\alpha}=0$ for all $\alpha\notin\{(1,n-1),(0,n)\}$, $a_{(1,n-1)}=3t^3$, and $a_{(0,n)}=-2t^2$. 
Now let $L$ be the submatrix of $Jac_n(f)$ formed by the columns corresponding to 
$\{\beta+(1,0)| 0\leq|\beta|\leq n-1,\beta\neq(0,n-1)\}$. Then $L$ can be divided into two blocks: one corresponding to a 
(square) diagonal matrix and a line having only zeroes (see lemma \ref{echelon} and its proof). As before, this implies that 
$h_{\beta}=0$ for all $0\leq|\beta|\leq n-1$, $\beta\neq(0,n-1)$ and so
$$m=h_{(0,n-1)}F_{(0,n-1)}=h_{(0,n-1)}\big(3t^4\dnb(x_1)\dnb(x_2)^{n-1}-2t^3h_{(0,n-1)}\dnb(x_2)^n\big).$$
Since this is not possible, we conclude that $[m]\neq[0]$.
\end{exam}

\begin{rem}
It was suggested by S. Suzuki that the module of differentials of non-normal finitely generated algebras always have torsion.
\end{rem}

\begin{exam}
It was shown by S. Suzuki that theorem \ref{tors} fails for $n=1$ if the hypothesis of complete intersection is removed. More 
precisely, Suzuki exhibited an example of a normal non-complete intersection whose module of differentials has
torsion. The example is the following (see \cite[Example]{S}). 

Let $f_1=x_2^2-x_1x_3$, $f_2=x_3^2-x_2x_4$, $f_3=x_2x_3-x_1x_4$. Denote as $B$ the quotient ring 
$k[x_1,x_2,x_3,x_4]/\langle f_1,f_2,f_3 \rangle$. $B$ is a normal non-complete intersection $k$-algebra. Let 
$m_1=-x_4dx_1+2x_3dx_2-x_2dx_3$.
Then $[m_1]\in\Omega^1_B=\oplus_{\{\alpha\in\N^4||\alpha|=1\}}Bd(x)^{\alpha}/Bdf_1+Bdf_2+Bdf_3$ is a non-zero torsion 
element since $x_1m_1=x_2df_1$ (that $[m_1]\neq[0]$ can be checked by a direct computation).

Now we show that $\tors(\Omega^{(2)}_B)\neq0$. Consider the following element of 
$\oplus_{\{\alpha\in\N^4|1\leq|\alpha|\leq2\}}Bd(x)^{\alpha}$:
$$m_2=-x_4d^2_B(x_1)^2+2x_3d^2_B(x_1)d^2_B(x_2)-x_2d^2_B(x_1)d^2_B(x_3).$$
Then $x_1m_2=x_2F^1_{(1,0,0,0)}$ (see (\ref{efes})).  Thus, by proposition \ref{omb}, $[m_2]\in\tors(\Omega^{(2)}_B)$.
To show that $[m_2]\neq[0]$ we check that $m_2\notin \langle F^i_{\beta}|i\in\{1,2,3\},0\leq|\beta|\leq1 \rangle_B$ using 
$\mathtt{SINGULAR}$ $\mathtt{4}$-$\mathtt{0}$-$\mathtt{2}$ (\cite{DGPS}) and the Jacobian matrix of order 2 of $f_1,f_2,f_3$
(ordered with graded lexicographical order):
\[
\footnotesize
\left(
		\begin{array}{cccccccccccccc}
		-x_3 & 2x_2  & -x_1 & 0 & 0 & 0 & 1 & -1 & 0 & 0 & 0 & 0 & 0 & 0 \\
		0 & 0 & 0 & 0 & -x_3 & 2x_2  & 0 & -x_1 & 0 & 0 & 0 & 0 & 0 & 0 \\
		0 & 0 & 0 & 0 & 0 & -x_3 & 2x_2  & 0 & -x_1 & 0 & 0 & 0 & 0 & 0  \\
		0 & 0 & 0 & 0 & 0 & 0 & 0 & -x_3 & 2x_2  & -x_1 & 0 & 0 & 0 & 0  \\
		0 & 0 & 0 & 0 & 0 & 0 & 0 & 0 & 0 & 0 & -x_3 & 2x_2 & -x_1 & 0 \\
		0 & -x_4  & 2x_3 & -x_2 & 0 & 0 & 0 & 0 & 0 & 1 & 0 & -1 & 0 & 0 \\
		0 & 0 & 0 & 0 & 0 & -x_4 & 0 & 2x_3 & 0 & 0 & -x_2 & 0 & 0 & 0 \\
		0 & 0 & 0 & 0 & 0 & 0 & -x_4 & 0 & 2x_3 & 0 & 0 & -x_2 & 0 & 0 \\
		0 & 0 & 0 & 0 & 0 & 0 & 0 & 0 & -x_4 & 2x_3 & 0 & 0 & -x_2 & 0 \\
		0 & 0 & 0 & 0 & 0 & 0 & 0 & 0 & 0 & 0 & 0 & -x_4 & 2x_3 & -x_2 \\
		-x_4 & x_3 & x_2 & -x_1 & 0 & 0 & 0 & 0 & 1 & 0 & -1 & 0 & 0 & 0 \\
		0 & 0 & 0 & 0 & -x_4 & x_3 & 0 & x_2 & 0 & 0 & -x_1 & 0 & 0 & 0 \\
		0 & 0 & 0 & 0 & 0 & -x_4 & x_3 & 0 & x_2 & 0 & 0 & -x_1 & 0 & 0 \\
		0 & 0 & 0 & 0 & 0 & 0 & 0 & -x_4 & x_3 & x_2 & 0 & 0 & -x_1 & 0 \\
		0 & 0 & 0 & 0 & 0 & 0 & 0 & 0 & 0 & 0 & -x_4 & x_3 & x_2 & -x_1
		\end{array}
\right).
\]
We conclude that $\tors(\Omega^{(2)}_B)\neq0$.
\end{exam}

\begin{rem}
It remains open to check if
$$m_n=-x_4d^n_B(x_1)^n+2x_3d^n_B(x_1)^{n-1}d^n_B(x_2)-x_2d^n_B(x_1)^{n-1}d^n_B(x_3)$$
gives place to a non-zero torsion element of $\Omb$ for $n\geq3$. As in the previous example, $x_1m_n=x_2F^1_{(n-1,0,0,0)}$.
What seems to be difficult to prove is whether $[m_n]\neq[0]$.
\end{rem}

\section*{Acknowledgements}

We are grateful to Luis Nu\~nez Betancourt for several inspiring discussions.

\vspace{.5cm}
{\footnotesize \textsc {P. Barajas, Universidad Aut\'onoma de Zacatecas.} \\
Email: paulvbg@gmail.com}\\
{\footnotesize \textsc {D. Duarte, Universidad Aut\'onoma de Zacatecas-CONACYT.} \\
E-mail: aduarte@uaz.edu.mx}

\begin{thebibliography}{XXX}
\addcontentsline{toc}{chapter}{\numberline{References}}
\bibitem[B]{B}R. Berger; \textit{Differentialmoduln eindimensionaler lokaler Ringe},  Math. Zeit. \textbf{81}, pp. 326-354, (1963).
\bibitem[DGPS]{DGPS}W. Decker, G.-M. Greuel, G. Pfister, H. Sch\"onemann;
\newblock {\sc Singular} {4-0-2} --- {A} computer algebra system for polynomial computations.
\newblock {http://www.singular.uni-kl.de} (2015).
\bibitem[D]{D}D. Duarte; \textit{Computational aspects of the higher Nash blowup of hypersurfaces}, Journal of Algebra,
\textbf{477}, pp. 211-230, (2017). 
\bibitem[DG]{DG}D. Duarte, A. Giles; \textit{Higher-order tangent map and applications to the higher Nash blowup of toric varieties}, arXiv:1803.04595.
\bibitem[E]{E}D. Eisenbud; \textit{Commutative Algebra with a View toward Algebraic Geometry},
Graduate Texts in Mathematics, Vol. 150, 1994.
\bibitem[Er]{Er}A. Erdogan; \textit{Homological dimension of the universal modules for hypersurfaces}, Comm. in Alg., 
\textbf{24}(5), pp. 1565-1573, (1996).
\bibitem[F]{F}D. Ferrand; \textit{Suite r\'eguliere et intersection complete}, C. R. Acad. Sci. Paris, \textbf{264}, pp. 427-428, (1967).
\bibitem[G]{G}A. Grothendieck; \textit{\'{E}l\'ements de g\'eometrie alg\'ebrique IV, Quatri\`eme partie},
\'{E}tude locale des sch\'emas et des morphismes des sch\'emas, Publ. Math. IHES 32 (1967).
\bibitem[K]{K}G. Kemper; \textit{A Course in Commutative Algebra}, Graduate Texts in Mathematics, Vol. 256, 2011.
\bibitem[LT]{LT}D. Laksov, A. Thorup; \textit{Weierstrass points on schemes}, J. Reine Angew. Math.
no. 460, (1995).
\bibitem[L]{L}J. Lipman; \textit{Free derivation modules on algebraic varieties}, Amer. J. Math., \textbf{87}, pp. 874-898, (1965).
\bibitem[N]{N}Y. Nakai; \textit{High order derivations I}, Osaka J. Math., \textbf{7}, pp. 1-27, (1970). 
\bibitem[O]{O}H. Osborn; \textit{Modules of differentials I}, Math. Ann., \textbf{170}, pp. 221-244, (1967).
\bibitem[S]{S}S. Suzuki; \textit{On torsion of the module of differentials of a locality which is a complete intersection}, J. Math. Kyoto
Univ., \textbf{4}-3, pp. 471-475, (1965).
\end{thebibliography}
\end{document}